\newcommand{\abs}[1]{\lvert#1\rvert}
\newcommand{\cali}[1]{\mathcal{#1}}
\newcommand{\bb}[1]{\mathbb{#1}}
\newcommand{\perms}{\mathfrak{S}}
\newcommand{\ibar}{{\bar{\imath}}}
\newcommand{\CRTexc}{\mathcal E_{\mathrm{CRT}}}
\DeclareMathOperator{\E}{\mathbb{E}}
\DeclareMathOperator{\idf}{\mathds{1}}
\DeclareMathOperator*{\Prob}{\mathbb{P}}
\DeclareMathOperator{\rank}{rank}
\DeclareMathOperator{\Leb}{Leb}
\DeclareMathOperator*{\argmin}{argmin}
\DeclareMathOperator{\perm}{perm}
\DeclareMathOperator{\Perm}{Perm}
\DeclareMathOperator{\subperm}{subperm}
\DeclareMathOperator{\supp}{supp}
\DeclareMathOperator{\Id}{Id}
\DeclareMathOperator{\diam}{diam}
\DeclareMathOperator{\width}{width}
\DeclareMathOperator{\height}{height}
\DeclareMathOperator{\Dirichlet}{Dirichlet}
\DeclareMathOperator{\Image}{Im}
\newtheorem{theorem}{Theorem}[section]
\newtheorem{lemma}[theorem]{Lemma}
\newtheorem{proposition}[theorem]{Proposition}
\newtheorem{corollary}[theorem]{Corollary}
\newtheorem{fact}[theorem]{Fact}
\theoremstyle{definition}
\newtheorem{definition}[theorem]{Definition}
\theoremstyle{remark}
\newtheorem{remark}[theorem]{Remark}
\newtheorem{observation}[theorem]{Observation}
\author{Micka\"el Maazoun}
\title{On the Brownian separable permuton}
\address{Universit\'e de Lyon -- \'Ecole Normale Sup\'erieure de Lyon -- Unit\'e de Math\'ematiques Pures et Appliqu\'ees}
\email{mickael.maazoun@ens-lyon.fr}
\subjclass[2010]{60C05; 05A05}
\date{April 1,2019}
\begin{document}

\begin{abstract}
	The Brownian separable permuton is a random probability measure on the unit square, which was introduced by Bassino, Bouvel, F\'eray, Gerin, Pierrot (2016) as the scaling limit of the diagram of the uniform separable permutation as size grows to infinity.
	We show that, almost surely, the permuton is the pushforward of the Lebesgue measure on the graph of a random measure-preserving function associated to a Brownian excursion whose strict local minima are decorated with i.i.d. signs. As a consequence, its support is almost surely totally disconnected, has Hausdorff dimension one, and enjoys self-similarity properties inherited from those of the Brownian excursion.  The density function of the averaged permuton is computed and a connection with the shuffling of the Brownian continuum random tree is explored.
\end{abstract}

\maketitle

\section{Introduction}
For $n\geq 1$, let $\perms_n$ be the set of permutations of $\llbracket 1,n\rrbracket$, and $\perms = \sqcup_{n\geq 1}\perms_n$.
We use the one line notation $\sigma = (\sigma(1)\,\sigma(2)\,\cdots\, \sigma(n))$ for $\sigma \in \perms_n$.
A \emph{pattern} in a permutation $\sigma\in \perms_n$ induced by the indices $1\leq i_1<\ldots i_k \leq n$ is the permutation $\pi \in \perms_k$ that is order-isomorphic to the word $(\sigma(i_1),\ldots,\sigma(i_k))$. 
The \emph{density} of the pattern $\pi\in \perms_k$ in $\sigma\in \perms_n$ is the proportion of increasing $k$-uples in $\llbracket 1,n\rrbracket$ that induce $\pi$ in $\sigma$.
A \emph{class} of permutations is a subset of $\perms$ that is stable by pattern extraction, and is characterized by the pattern avoidance of some minimal family of permutations called its \textit{basis} \cite[5.1.2]{Bona}.
There is a large literature on the asymptotics of the pattern densities and diagram shape of a large typical permutation in several classes. This type of results can, to some extent, be encoded as convergence to a \emph{permuton}. In \cite{main} (to which we refer the reader for an extensive review of literature), Bassino, Bouvel, F\'eray, Gerin and Pierrot studied the class of \emph{separable permutations} and showed the convergence of a uniform large separable permutation to a \emph{Brownian separable permuton}, of which the present paper is a detailed study. Let us start with a few definitions.
\subsection{Limits of permutations}
A probability measure on the unit square $[0,1]^2$ is called a \textit{permuton} if both its marginals on $[0,1]$ are uniform.
With every permutation $\sigma\in \perms_n$ we associate a permuton $\mu_\sigma$ by setting $\mu_\sigma(dxdy) = n\idf\left[\sigma(\lfloor xn \rfloor) = \lfloor yn \rfloor\right] dxdy$. The set of permutons is equipped with the weak convergence of probability measures, which makes it compact.
A sequence of permutations $(\sigma_n)_n$ is said to converge to a permuton $\mu$ if and only if $\mu_{\sigma_n}$ converges weakly to $\mu$. This theory was introduced by Hoppen, Kohayakawa, Moreira, R\'ath, Sampaio in \cite{hoppen}, where it is shown that convergence of a sequence of permutations to a permuton is equivalent to convergence of all pattern densities. As a result, permutons can be alternatively constructed as the completion of the space of permutations w.r.t. convergence of all pattern densities. 
This theory is similar to graphons as limits of dense graphs, and unifies the study of the limit shape of the permutation diagram with that of the limit of pattern densities.

\subsection{The case of separable permutations}
\label{BP_sec:intro_separable}
A permutation is separable if it does not have $(2413)$ and $(3142)$ as an induced pattern.
Separable permutations were introduced in \cite{Bose1993},
but appeared earlier in the literature \cite{AvisMonroe81,shapiro1991bootstrap}.
They are counted by the large Schr\"oder numbers: $1, 2, 6, 22, 90, 394, \ldots$
and enjoy many simple characterizations \cite{Bose1993,AvisMonroe81,shapiro1991bootstrap,Ghys}

The one most relevant to this paper is in terms of trees.
A signed tree $t$ is an rooted plane tree whose internal nodes are decorated with signs in $\{\oplus, \ominus\}$.
We label its leaves $1,\ldots,k$ according to the natural ordering of $t$.
The signs can be interpreted as coding a different ordering of the rooted tree $t$: we call $\tilde t$ the tree obtained from $t$ by reversing the order of the children of each node with a minus sign.
The order of the leaves is changed by this procedure, and we set $\sigma(i)$ to be the position in $\tilde t$ of the leaf $i$.
We call $\perm(t)$ this permutation $\sigma \in \perms_k$. 
It turns out \cite[Lemma 3.1]{Bose1993} that separable permutations are exactly the ones that can be obtained this way.
\begin{figure}[h]
	\centering
	\includegraphics{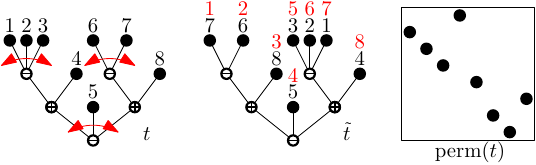}
	\caption{The permutation associated to a signed tree.}
\end{figure}

The article \cite{main} shows that separable permutations have a permuton limit in distribution, yielding the first example of a nondeterministic permuton limit of a permutation class. The representation by signed trees is fundamental in their proof.
\begin{theorem}[theorem 1.6 of \cite{main}]\label{BP_thm:bassinosimple}
	If $\sigma_n$ is a uniform separable permutation of size $n$, then $\mu_{\sigma_n}$ converges in distribution, in the weak topology, to a non-deterministic permuton $\mu^{1/2}$ called the Brownian separable permuton of parameter $1/2$.
\end{theorem}

This result comes with a characterization of $\mu^{1/2}$  (which we recall in section 2) which suggests that it can be realized as a measurable functional of a \textit{signed Brownian excursion} (see \cref{BP_rk:cesar}). The authors of \cite{main} left this, along with the study of the support of $\mu^{1/2}$, as open questions that the present paper aims at addressing.

Let us mention that \cref{BP_thm:bassinosimple} was generalized in \cite{main2,main3} by the same authors along with the present author to various families of permutation classes. These results yield, among others, a one-parameter family $(\mu^p)_{p\in (0,1)}$ of possible limits, called the \textit{biased Brownian separable permutons}.
We set our paper in this generality and fix once and for all $p\in (0,1)$. We postpone a precise definition of $\mu^p$ to \cref{BP_sec:defs}.

\subsection{The signed Brownian excursion}
We call \textit{continuous excursion} a nonnegative function $g:[0,1]\to \mathbb R_+$ that is positive on $(0,1)$. The \textit{inner local minima} of $g$ are the points of $(0,1)$ in which $g$ is locally minimal, and we say that $x\in (0,1)$ is \textit{not a one-sided minimum} of $g$ if
\[\forall \epsilon>0, \exists x_1 \in (x-\epsilon,x), x_2 \in (x,x+\epsilon) \text{ s.t. } g(x_1)<g(x) \text{ and } g(x_2)<g(x).\]
A \textit{CRT excursion} is a continuous function $g : [0,1] \to \bb R_+$ such that:
\begin{enumerate}[label =(CRT\arabic*), ref=(CRT\arabic*),leftmargin=*]	
	\item the inner local minima of $g$ are dense in $[0,1]$, \label{BP_crtdensitybp}
	\item 
	the values at the inner local minima are all different, \label{BP_crtbinary}
	\item the set of times that are not one-sided minima has Lebesgue measure 1. \label{BP_crtasleaf}
\end{enumerate}
In a CRT excursion, all inner local minima are necessarily strict local minima, and hence countable. It will be useful for our purposes to enumerate them in a well-defined manner.
\begin{definition}\label{BP_def:mesenum}
	A \textit{measurable enumeration} is a sequence $(b_i)_{i\in \mathbb N}$ of functions from the set $\CRTexc$ of CRT excursions to $[0,1]$ such that
	\begin{enumerate}[label =(ME\arabic*), ref=(ME\arabic*),leftmargin=*]
		\item for every $g\in \CRTexc$, $i\mapsto b_i(g)$ is a bijection between $\mathbb N$ and the inner local minima of $g$,\label{BP_mesenum1}
		\item for every $i\in \mathbb N$, $g\mapsto b_i(g)$ is measurable,\label{BP_mesenum2}
		\item the function which maps $(g,u,v)\in \CRTexc\times[0,1]^2$ to $i\in \mathbb N$ if $b_i \in (u,v)$ is the unique point in $[u,v]$ in which the minimum of $g$ on $[u,v]$ is reached, and $\infty$ otherwise, is measurable.\label{BP_mesenum3}
	\end{enumerate}
\end{definition}
We fix once and for all a measurable enumeration (see \cref{BP_sec:defs} for an explicit construction of one, which comes from \cite{main}).
We call \textit{signed excursion} a pair $(g,s)$, where $g$ is a CRT excursion and $s$ is a sequence in $\{\oplus,\ominus\}^\mathbb N$. The sign $s_i$ is to be considered as attached to the inner local minimum $b_i$.

Let $(g,s)$ be a signed excursion. If $x< y\in [0,1]$, we say that $x$ and $y$ are $g$-comparable if and only if the minimum of $g$ on $[x,y]$ is reached at a unique point which is a strict local minimum $b_i \in (x,y)$.
In this case, if $s_i=\oplus$, we say $x\lhd_g^s y$, otherwise $y \lhd_g^s x$. 

The relation $\lhd_g^s$ is a strict order, but it is not total. However, two distinct points which are not one-sided minima are always $g$-comparable, hence $\lhd_g^s$ is total on a set of measure $1$. See \cref{BP_lem:comparability} for the proof of these claims. Moreover we will see later (\cref{BP_sec:intro_f}) a natural extension to a total preorder on $[0,1]$.

In what follows, we consider the signed excursion $(e,S)$, where $e$ is a the normalized Brownian excursion, and $S$ is an independent sequence of independent signs with bias $p$, that is probability $p$ of being $\oplus$ and $1-p$ of being $\ominus$.
It is the main ingredient in building $\mu^p$.
\subsection{Construction of the permuton}
If $(g,s)$ is a signed excursion, we define 
\begin{equation}\varphi_{g,s}(t) = \Leb\{u\in [0,1], u\lhd_g^s t\},\quad t\in[0,1]
\label{BP_eq:defvarphi}
\end{equation}
and 
\[\mu_{g,s} = (\Id,\varphi_{g,s})_*\Leb.\] Here $H_*\nu$ denotes the pushforward measure $\nu(H^{-1}(\cdot))$, whenever $H$ and $\nu$ are respectively a measurable function and a measure defined on the same space. The reader may report to \cref{BP_fig:eandf}, disregarding for now the vertical excursion $\tilde e$, to see a simulation of $e,S$ and $\varphi_{e,S}$.
Our main theorem is the following:
\begin{theorem}\label{BP_thm:main}
	The maps $(t,g,s) \mapsto \varphi_{g,s}(t)$ and $(g,s)\mapsto \mu_{g,s}$ are measurable, and
	the random measure $\mu_{e,S}$ is distributed like $\mu^p$, the biased Brownian separable permuton of parameter $p$.
\end{theorem}

This theorem is proved in \cref{BP_sec:varphi}, along with a corollary which shows that the convergence of \cref{BP_thm:bassinosimple} can be rewritten without permutons, only in terms of functional convergence.
To any permutation $\sigma \in \perms_n$, we associate a c\`adl\`ag, piecewise affine, measure-preserving function $\varphi_\sigma :  [0,1] \to [0,1]$ with 
$\varphi_\sigma(x) = \frac 1 n (\sigma (\lfloor nt \rfloor +1)-1) + \frac 1 n \{ nt \}$. 
\begin{corollary}\label{BP_cor:cvoffunctions}
	Let $\sigma_n$ be a random permutation in  $\perms_n$ for every $n\in \mathbb N$. If $\mu_{\sigma_n}$ converges in distribution to $\mu^p$, then for every $q\in [1,\infty)$, we have the convergence in distribution in the space $L^q([0,1])$:
	\[\varphi_{\sigma_n} \xrightarrow[n\to\infty]{d} \varphi_{e,S} \]
\end{corollary} 

\subsection{Properties of the permuton}
This continuum construction allows us to derive several properties of $\mu^p$.
In \cref{BP_sec:onedim}, we prove the following result.
\begin{theorem} \label{BP_thm:dimension}
	Almost surely, the support of $\mu^p$ is totally disconnected, and its Hausdorff dimension is 1 (with one-dimensional Hausdorff measure bounded above by $\sqrt 2$). 
\end{theorem}

The claim that the Hausdorff dimension is $1$ also comes as a special case of a result of Riera \cite{Riera}: any permuton limit in distribution of random permutation in a proper class, if it exists, almost surely has a support of Hausdorff dimension $1$. 

In \cref{BP_sec:ss}, we show that $\mu^p$ inherits the self-similarity properties of $e$, in that $\mu^p$ contains a lot of rescaled distributional copies of itself. 
In particular, we get the following theorem, illustrated in \cref{BP_fig:ss}, which states that $\mu^p$ can be obtained by cut-and-pasting three independent Brownian separable permutons.

\begin{theorem}\label{BP_thm:ss}
	Let $(\Delta_0,\Delta_1,\Delta_2)$ be a random variable of $ \Dirichlet(\tfrac 1 2,\tfrac 1 2,\tfrac 1 2)$ distribution. 
	Let $\mu_0, \mu_1, \mu_2$ be independent and distributed like $\mu^p$, and
	conditionally on $\mu_0$, let $(X_0,Y_0)$ be a random point of distribution $\mu_0$.
	Let $\beta$ be an independent Bernoulli r.v. of parameter $p$.
	We define the piecewise affine maps of the unit square into itself:
	\begin{equation}\begin{aligned}\label{BP_eq:definitiontheta}
	& \theta_0(x,y) & = (\eta_0(x), \zeta_0(y))& = \Delta_0(x,y) + (1-\Delta_0)(\idf_{[x>X_0]},\idf_{[y>Y_0]})  \\
	& \theta_1(x,y) & = (\eta_1(x), \zeta_1(y))&= \Delta_1(x,y) +\Delta_0(X_0,Y_0) +\Delta_2(0,\beta)              \\
	& \theta_2(x,y) & = (\eta_2(x), \zeta_2(y)) &= \Delta_2(x,y) +\Delta_0(X_0,Y_0) +\Delta_1(1,1-\beta)           
	\end{aligned}\end{equation}
	Then 
	\begin{equation}\Delta_0 \theta_0{}_*\mu_0 + \Delta_1 \theta_1{}_*\mu_1 + \Delta_2 \theta_2{}_*\mu_2 \stackrel d= \mu^p,
	\label{BP_eq:ss}
	\end{equation}
\end{theorem}

\begin{figure}[htb]
	\centering
	
	\includegraphics{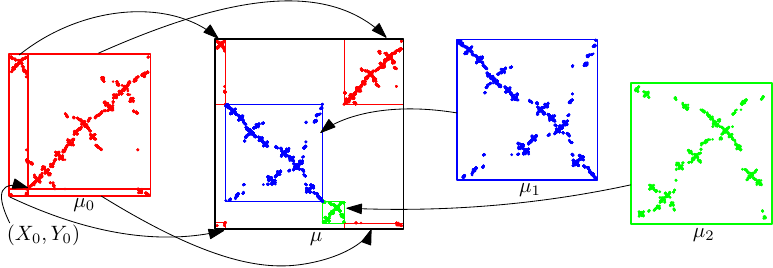}
	\caption{The construction of $\mu$ from three independent permutons distributed like $\mu$. Here $\beta = 0$ and $(\Delta_0,\Delta_1,\Delta_2) \approx (0.4,0.5,0.1)$.\label{BP_fig:ss}}
\end{figure}

We believe that a result by Albenque and Goldschmidt \cite{albenque2015} about the Brownian CRT can be adapted to show that the \textit{distributional identity} \eqref{BP_eq:ss} characterizes $\mu^p$ (see \cref{BP_rk:ag}.)

Finally, our construction allows us to compute the averaged permuton $\E\mu^p$, obtained by taking $\E\mu^p(A) = \E[\mu^p(A)]$ for every Borel set $A$. We get the following result.
\begin{theorem}\label{BP_thm:expectation}
	The permuton $\E\mu^p$ is the measure $\alpha(x,y)dxdy$, where $\alpha(x,y)$ equals
	\[
	\int_{\max(0,x+y -1)}^{\min(x,y)} \frac{3p^2(1-p)^2 da }
	{2\pi(a(x-a)(1-x-y+a)(y-a))^{3/2}{\left(\frac{p^2}{a}+\frac{(1-p)^2}{(x-a)}+\frac{p^2}{(1-x-y+a)}+\frac{(1-p)^2}{(y-a)}\right)^{5/2}}}.
	\]
\end{theorem}

Plots for different values of $p$ are provided on \cref{BP_fig:plot_expectation}. The function $\alpha_p$ is a priori a rather complicated elliptic integral involving the root of a polynomial of degree 3 in $a$. However the case $p=1/2$ is special: first of all $\alpha_{1/2}$ has all the symmetries of the square, so that we may restrict to $0\leq x\leq \min(y ,1-y)$. Furthermore thanks to some cancellations, the polynomial under the root is only of degree $2$,
and the integral can be solved for instance with a computer algebra system, yielding
\begin{align}
\alpha_{1/2}(x,y) &= \frac 1 {\pi} (\beta(x,y) + \beta(x,1-y)),\quad 0\leq x\leq \min(y ,1-y),\label{BP_eq:alpha12}\\
\text{where }\beta(x,y) &= \frac{3 x y-2 x-2 y+1}{(1-x) (1-y)}\sqrt{\frac{1-x-y} {x y}}+3\arctan\sqrt{\frac{x y}{1-x-y}}.\nonumber
\end{align}
The function $\alpha$ already appeared in a different form in the work of Dokos and Pak \cite{DokosPak} as the expected shape of doubly-alternating Baxter permutations. We give more details about this at the end of the introduction.
\begin{figure}[htb]
	\centering
	
	\includegraphics[width=0.28\linewidth]{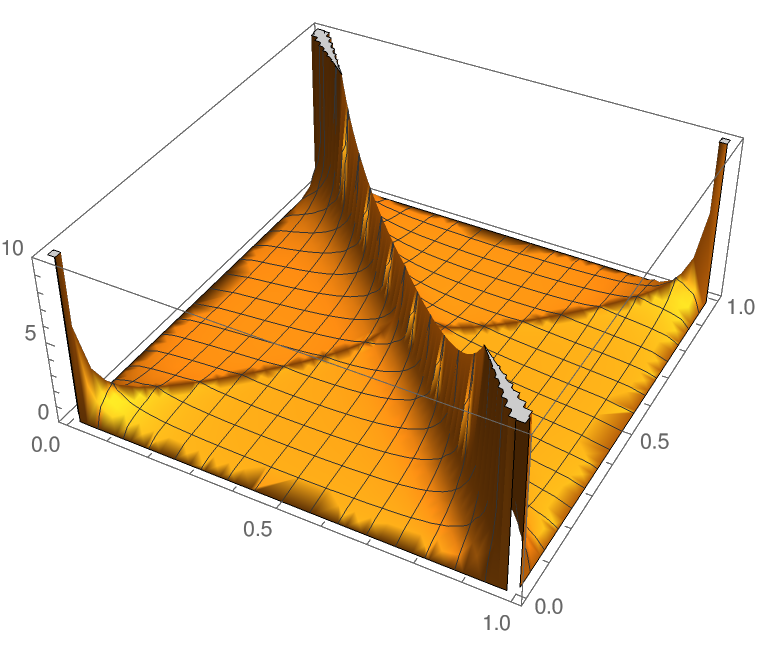}\includegraphics[width=0.28\linewidth]{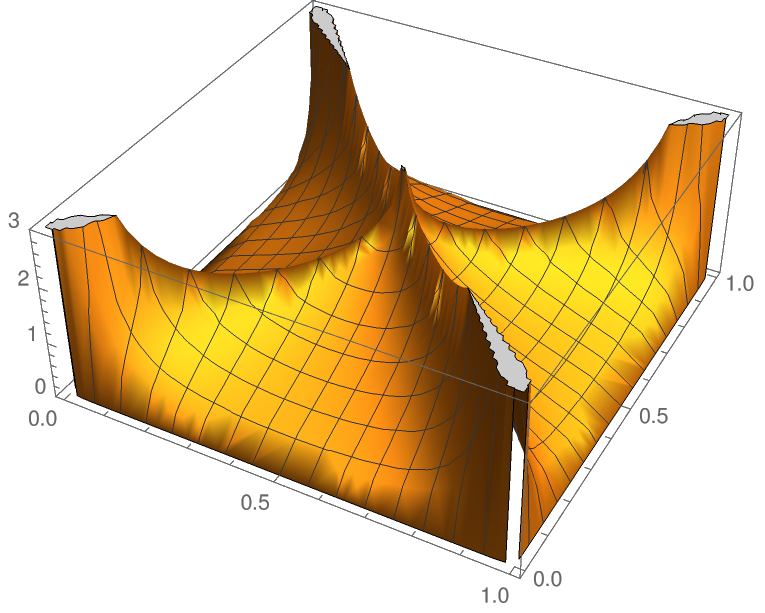}\includegraphics[width=0.28\linewidth]{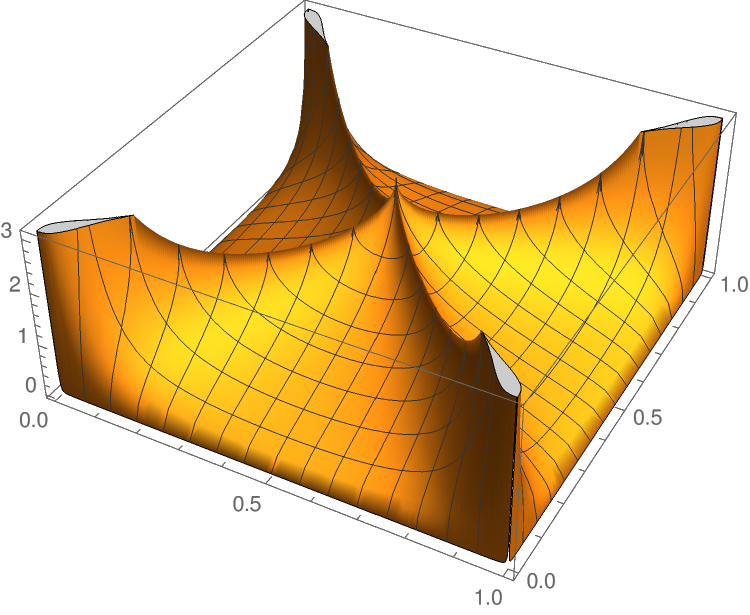}
	\caption{The function $\alpha_p$ for $p\in\{0.3,0.45,0.5\}$.\label{BP_fig:plot_expectation}}
\end{figure}

\subsection{Shuffling of continuous trees}
\label{BP_sec:intro_f}

Through a classical construction (which goes back to Aldous \cite{aldous1993}), a Brownian excursion $e$ encodes a continuous (rooted and ordered) tree $\mathcal T_e$ called the Brownian CRT. This encoding puts inner minima of $e$ in correspondence with branching points of $\mathcal T_e$, so that the pair $(\mathcal T_e,S)$ may be seen as a continuous signed tree. 

The next few results make this rigorous and explain how the random function $\varphi_{e,S}$ relates to the tree $(\mathcal T_e,S)$, much like separable permutations relate to signed trees. Those results, and the notation introduced here, are not needed for the rest of the paper, albeit the fact that $e$ encodes a tree is an idea that underlies most of the arguments of the paper.

We recall the construction of continuous trees from continuous excursions, in the formalism of Le Gall and Duquesne \cite{legall2005, duquesne}. Let $g$ be a continuous excursion. Set $d_g(x,y) = g(x)+g(y)-2\min_{[x,y]}g$ for $x,y\in[0,1]$. The function $d_g$ is a pseudo-distance. Identifying points $x,y\in[0,1]$ such that $d_g(x,y)=0$ yields a quotient metric space $(\cali T_g,d_g)$ with a continuous canonical surjection $p_g : [0,1]\mapsto \cali T_g$.
Let $\rho_g = p_g(0)$ be the root of $\mathcal T_g$, and define a total order $\leq_g$ on $\mathcal T_g$ by setting $x\leq_g y \iff \inf p_g^{-1}(x) \leq \inf p_g^{-1}(y)$. Define a probability measure $\lambda_g = p_g{}_*\Leb_{[0,1]}$. 
When $g=e$, we get the well-known Brownian CRT. 

\Cref{BP_sec:f} is devoted to the proof of the following theorem, illustrated in \cref{BP_fig:eandf}.
\begin{theorem}
	\label{BP_thm:f}
	There exists a random CRT excursion $\tilde e$, defined on the same probability space as $(e,S)$, with the following properties: 
	\begin{enumerate}
		\item The excursion $\tilde e$ has the distribution of a normalized Brownian excursion, with the same field of local times at time 1 as $e$.
		\item Almost surely, the function $\varphi_{e,S}$ is an isometry between the pseudo-distances $d_e$ and $d_{\tilde e}$. 
		In particular, $\tilde e \circ \varphi_{e,S} = e$.
	\end{enumerate}
\end{theorem}

\begin{figure}[htb]
	\centering
	\includegraphics{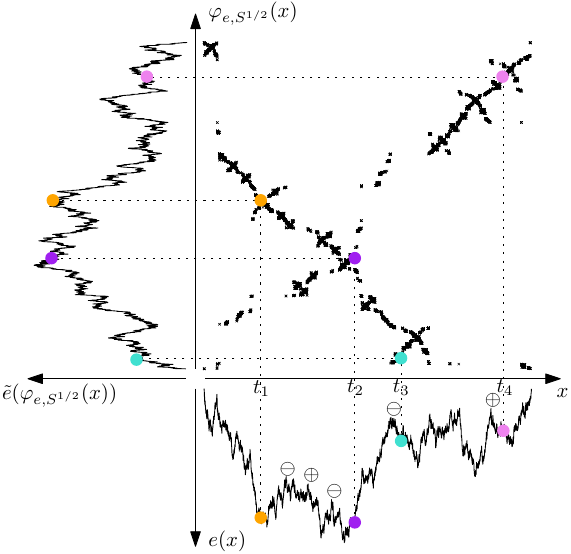}
	\caption{A realization of $(e, S)$ (here $p=1/2$), and the associated functions $\varphi_{e, S}$ and $\tilde e$, highlighting the property $\tilde e \circ \varphi_{e, S} = e$. Four points $t_1<\ldots<t_4$ are specified.\label{BP_fig:eandf}}
\end{figure}

This result has an interpretation in terms of shuffling of continuous trees, mirroring the construction of separable permutations described in \cref{BP_sec:intro_separable}.

When $g$ is a CRT excursion,
the construction of $\cali T_g$ puts the inner local minima of $g$ in bijection with the branching points of $\mathcal T_g$.
Hence, when $(g,s)$ is a signed excursion,
the order $\leq_g^s$ can be defined on the tree $\cali T_g$
by inverting at all branching points with a minus sign, as follows.
Let $x,y\in \cali T_g$ such that $x\leq_g y$. 
If there exists a strict local minimum $b_i$ such that
$\sup p_g^{-1}(x)< b_i < \inf p_g^{-1}(y)$, with $g(b_i) = \inf\{g(t), \sup p_g^{-1}(x)\leq t \leq \inf p_g^{-1}(y)\}$, and $s(b_i) = \ominus$, then set $x \geq_g^s y$. 
Otherwise, set $x\leq_g^s y$.
This defines a total order compatible with the relation on $[0,1]$ defined in the previous section: whenever $x$ and $y$ are $g$-comparable, then $x\lhd_g^s y \iff p_g(x) <_g^s p_g(y)$. This construction is illustrated in \cref{BP_fig:deuxarbres}.

\begin{figure}[t]
	\centering
	\includegraphics{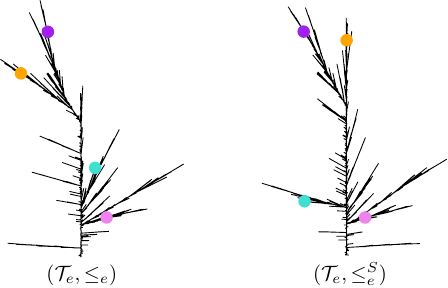}
	\caption{The tree $\mathcal T_e$, drawn according to the two orders $\leq_e$ and $\leq_e^S$. Four point have been marked. The data is the same as in \cref{BP_fig:eandf}.\label{BP_fig:deuxarbres}}
\end{figure}

This allows us to give an interpretation of \cref{BP_thm:f} in terms of trees. If we consider the tree $(T_{\tilde e},d_{\tilde{e}},\rho_{\tilde{e}},\leq_{\tilde{e}},\lambda_{\tilde e})$, \cref{BP_thm:f}(2) says that, for $x,y\in [0,1]$, $d_e(x,y) = 0 \iff d_{\tilde e} ( \varphi_{e, S}(x), \varphi_{e, S}(y)) = 0$. We deduce that $p_e(x) = p_e(y) \iff p_{\tilde e}(\varphi_{e, S}(x))= p_{\tilde e}(\varphi_{e, S}(y))$, which implies that the map $\varphi_{e,S}$ factorizes through $p_e$ and $p_{\tilde e}$, that is
there is a unique map $\jmath:\mathcal T_e \to \mathcal T_{\tilde e}$ such that $ \jmath \circ p_e  =  p_{\tilde e}\circ\varphi_{e, S}$. 
It is immediate than $\jmath$ is an isometry $(\mathcal T_e,d_e) \leftrightarrow (\mathcal T_{\tilde e},d_{\tilde{e}})$. 
Moreover, $\jmath$ maps the root of $\mathcal T_e$ to the root of $\mathcal T_{\tilde e}$, is measure preserving and increasing w.r.t. $(\leq_{e}^{S}, \leq_{\tilde e})$. This discussion can be summarized in the following corollary of \cref{BP_thm:f}.
\begin{proposition}
	The map $\jmath: \mathcal T_e \leftrightarrow \mathcal T_{\tilde e}$ provides an isomorphism (of pointed, ordered, measured metric spaces) between the tree $(T_e,d_e,\rho_e,\leq_{e}^{S},\lambda_e)$ and the Brownian CRT $(T_{\tilde e},d_{\tilde{e}},\rho_{\tilde{e}},\leq_{\tilde{e}},\lambda_{\tilde e})$ constructed from the Brownian excursion $\tilde e$.
\end{proposition}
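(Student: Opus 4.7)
The strategy is to verify, one at a time, each of the four structural properties defining an isomorphism of pointed, ordered, measured metric spaces. The factorisation $\jmath\circ p_e = p_{\tilde e}\circ \varphi_{e,S}$ together with \cref{thm:f} reduces most of the task to routine manipulations with push-forwards.

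The chain of equivalences displayed just before the statement gives the well-definedness and injectivity of $\jmath$ simultaneously; surjectivity then follows by combining the closedness of $\jmath(\mathcal T_e)$ in $\mathcal T_{\tilde e}$ (since $\jmath$ is an isometric embedding of a compact space, as verified next) with the density of $\varphi_{e,S}([0,1])$ in $[0,1]$, which comes from $\varphi_{e,S}$ being measure-preserving (itself a consequence of $\mu_{e,S}$ being a permuton, by \cref{thm:main}). The isometry is then a direct restatement of \cref{thm:f}(2): $d_{\tilde e}(\jmath p_e(x),\jmath p_e(y)) = d_{\tilde e}(\varphi_{e,S}(x),\varphi_{e,S}(y)) = d_e(x,y)$. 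The root is preserved via $\jmath(\rho_e) = p_{\tilde e}(\varphi_{e,S}(0)) = p_{\tilde e}(0) = \rho_{\tilde e}$, using that $\varphi_{e,S}(0)=0$ because no $u\in(0,1]$ is $e$-comparable to $0$ (the required separating strict local minimum of $e$ cannot be smaller than $e(0)=0$). The measure preservation reduces to the one-line identity $\jmath_*\lambda_e = (p_{\tilde e}\circ \varphi_{e,S})_*\Leb = p_{\tilde e}{}_*\Leb = \lambda_{\tilde e}$.

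The only step requiring real care, and which I expect to be the main obstacle, is the preservation of the order. Given $u<_e^S v$ in $\mathcal T_e$, the plan is to pick well-chosen representatives $x\in p_e^{-1}(u)$ and $y\in p_e^{-1}(v)$, invoke the compatibility between $\lhd_e^S$ and $\leq_e^S$ recalled at the end of \cref{sec:intro_f} to ensure that $x$ and $y$ are $e$-comparable with $x\lhd_e^S y$, and observe directly from \eqref{eq:defvarphi} that $\{w:w\lhd_e^S x\}\subset \{w:w\lhd_e^S y\}$ up to a null set (via a short case analysis on which of the two relevant strict local minima has smaller $e$-value). This yields $\varphi_{e,S}(x)\leq \varphi_{e,S}(y)$, which, combined with the already-proved injectivity of $\jmath$ and the defining formula $a\leq_{\tilde e}b \iff \inf p_{\tilde e}^{-1}(a) \leq \inf p_{\tilde e}^{-1}(b)$, should force $\jmath(u)<_{\tilde e}\jmath(v)$. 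The genuine bookkeeping work is hidden here: one must choose representatives that actually realise the infima of their $\mathcal T_{\tilde e}$-fibres, and control the measure-zero exceptional sets (one-sided minima, non-comparable pairs, fibres of branching points) so that the argument passes cleanly through the quotient rather than merely almost everywhere on $[0,1]^2$.
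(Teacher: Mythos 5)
Your proposal is correct and follows the same route the paper implicitly takes: define $\jmath$ via the displayed chain of equivalences and then read off each of the four structural properties from \cref{thm:f} and the established properties of $\varphi_{e,S}$. The paper in fact gives no proof at all — it declares the isometry ``immediate'' and simply asserts that $\jmath$ preserves root, measure and order — so you are filling in genuine gaps rather than matching a written argument. Your treatment of well-definedness, isometry (from \cref{thm:f}(2)), root preservation (via $\varphi_{e,S}(0)=0$, since $0$ is $e$-comparable to no point) and measure preservation (a push-forward computation) are all clean; your surjectivity argument (compact image, hence closed, plus density of $\varphi_{e,S}([0,1])$ because a Lebesgue-preserving map must have dense image) is correct though slightly circular in its written order, since it invokes the isometry it later verifies. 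You are also right that order preservation is the only step where real work hides: for leaves of $\mathcal T_e$ it follows directly from \cref{lem:localization} and the compatibility $x\lhd_e^S y\iff p_e(x)<_e^S p_e(y)$, and the extension to fibres of branching and skeleton points is a bookkeeping exercise on $\inf p_{\tilde e}^{-1}(\cdot)$ that you flag but do not carry out, which is a reasonable level of detail given that the paper carries out none of it.
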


Combining this with the result of Duquesne on the uniqueness of coding functions of trees \cite[Thm 1.1]{duquesne}, we directly get an abstract construction of $\mu_{e, S}$.
\begin{proposition}
	Almost surely, the functions $\tilde e$ and $\jmath$ are uniquely determined by the fact that $\tilde e$ is continuous and $\jmath$ is an isomorphism between $(T_e,d_e,\rho_e,\leq_{e}^S,\lambda_e)$ and $(T_{\tilde e},d_{\tilde{e}},\rho_{\tilde{e}},\leq_{\tilde{e}},\lambda_{\tilde e})$. Any function $\phi$ which verifies
	$p_{\tilde{e}} \circ \phi = \jmath \circ p_e$
	must coincide with $\varphi_{e,S}$ on a set of measure 1, hence still verifies $\mu_{e,S} = (\Id,\phi)_*\Leb$.
\end{proposition}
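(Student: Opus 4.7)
The plan is to address two parts: uniqueness of the pair $(\tilde e,\jmath)$, and the almost-everywhere agreement of any valid $\phi$ with $\varphi_{e,S}$. The first follows from Duquesne's uniqueness result \cite[Thm 1.1]{duquesne} on continuous functions coding a given pointed, ordered, measured real tree; the second will exploit the CRT-excursion property of $\tilde e$ from \cref{thm:f}(1) to invert the projection $p_{\tilde e}$ on a set of full Lebesgue measure.

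For uniqueness, let $(\tilde e',\jmath')$ be another pair satisfying the hypotheses. The composition $\jmath' \circ \jmath^{-1}$ is an isomorphism of pointed, ordered, measured real trees between $(\mathcal T_{\tilde e},d_{\tilde e},\rho_{\tilde e},\leq_{\tilde e},\lambda_{\tilde e})$ and the corresponding structure coded by $\tilde e'$, so \cite[Thm 1.1]{duquesne} yields $\tilde e' = \tilde e$. With $\tilde e$ fixed, $\jmath' \circ \jmath^{-1}$ becomes an automorphism of the pointed, ordered, measured real tree coded by $\tilde e$. Any such automorphism is forced to be the identity: the order together with the measure induces the canonical parametrization $p_{\tilde e}:[0,1]\to \mathcal T_{\tilde e}$, and an order-, distance-, and measure-preserving self-map of $\mathcal T_{\tilde e}$ fixing the root must commute with this parametrization on the leaves (which have pairwise distinct positions in the order and form a dense subset), hence is trivial by continuity. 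Therefore $\jmath' = \jmath$.

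For the a.e.\ agreement, suppose $\phi$ satisfies $p_{\tilde e} \circ \phi = \jmath \circ p_e$. The defining identity $\jmath \circ p_e = p_{\tilde e} \circ \varphi_{e,S}$ recalled above the proposition gives $p_{\tilde e}(\phi(x)) = p_{\tilde e}(\varphi_{e,S}(x))$ for every $x \in [0,1]$. Almost surely, $\tilde e$ is a CRT excursion, so by \ref{crtasleaf} the set $A$ of times that are not one-sided minima of $\tilde e$ has full Lebesgue measure, with $p_{\tilde e}^{-1}(p_{\tilde e}(t)) = \{t\}$ for every $t \in A$. Since $\mu_{e,S}$ has uniform second marginal, $\varphi_{e,S}$ is Lebesgue-measure-preserving, so $\varphi_{e,S}^{-1}(A)$ has Lebesgue measure $1$; on this set the above identity forces $\phi(x) = \varphi_{e,S}(x)$. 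Two functions that agree almost everywhere induce identical pushforwards of the Lebesgue measure, hence $(\Id,\phi)_*\Leb = (\Id,\varphi_{e,S})_*\Leb = \mu_{e,S}$.

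The delicate step is the careful invocation of Duquesne's theorem: its natural hypotheses involve non-degeneracy properties of the coding excursion (no flat intervals, a sufficiently rich structure of local minima), which we have to verify are satisfied almost surely by both $e$ and $\tilde e$ so that both directions of the uniqueness assertion can be applied. The remainder of the argument is essentially a diagram chase combined with the standard CRT property \ref{crtasleaf}.
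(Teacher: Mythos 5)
Your proof is correct and follows the same route the paper gestures at: the paper offers no explicit argument beyond the one-line appeal to Duquesne's uniqueness theorem, and you fill in exactly the two steps that appeal implicitly requires (the rigidity of automorphisms of the ordered, measured, rooted CRT, and the inversion of $p_{\tilde e}$ on the full-measure set of leaves via \ref{crtasleaf}). The only point worth tightening in a final write-up is the automorphism step: it is cleanest to observe that the map $x\mapsto\lambda_{\tilde e}\{y:y\leq_{\tilde e}x\}$ equals $\inf p_{\tilde e}^{-1}(x)$ and is preserved by any order- and measure-preserving automorphism, which pins every leaf in place and hence, by density and continuity, forces the automorphism to be the identity.
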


\subsection{Comments and perspectives}

Let us mention another natural family of permutations: the doubly-alternating Baxter permutations, which are also the doubly-alternating separable permutations \cite{Ouchterlony}, and are counted by the Catalan numbers. The fact that they enjoy a tree decomposition similar to separable permutations, along with simulations \cite{DokosPak}, allows to boldly conjecture that they converge to the Brownian separable permuton of parameter $1/2$.

Dokos and Pak \cite[Thm 1.1]{DokosPak} compute the expected shape of doubly-alternating Baxter permutations: their result implies that for every Borel subset $A$ of the unit square, if $\sigma_n$ is a uniform doubly-alternating permutation of size $n$, then
$\E[\mu_{\sigma_n}(A)] \to \int_A \psi$, where $\psi$ has symmetries of the square and $\psi(x,y) =  \frac{1}{4\pi} \int_0^xdu\int_0^{x-u} \frac{dv}{[(u+v)(y-v)(1-y-u)]^{3/2}}$ for $0\leq x\leq y\wedge 1-y$. We can show that this function is the same as the one we computed for the expectation of the Brownian permuton of parameter $1/2$, further strengthening the conjecture. Indeed,
\begin{align*}\psi(x,y) 	&= \frac{1}{4\pi} \int_0^xdu\int_0^{u} \frac{dv}{[u(y-v)(1-y-u+v)]^{3/2}}\\
&= \frac{1}{4\pi} \int_0^xdu\left[\frac{2(-u+2 v-2 y+1)}{(u-1)^2 u^{3/2}\sqrt{(y-v) (1-y-u+v)}}\right]_{v=0}^{v=u}\\
&= \frac{1}{\pi} \int_0^x(\gamma(u,y) + \gamma(u,1-y))du
\end{align*}
where $\gamma(x,y) = \frac {x+2y-1}{2(1-x)^2x^{3/2}\sqrt{(y-x)(1-y-x)}}$.
We recall the definition of $\alpha_{1/2}$ and $\beta$ from \eqref{BP_eq:alpha12}. We can check that $\partial_{x}\beta(x,y) = \gamma(x,y)$, implying that $\psi = \alpha_{1/2}$. 

As already mentioned, the article \cite{main2} considers \textit{sub\-stitution-closed classes}, which are natural generalizations of the class of separable permutations. Depending on the class, several possible limits appear, among which are the $\mu^p$ for $p$ possibly different from $1/2$.
Another family of possible limits is \textit{the $\alpha$-stable permuton driven by $\nu$}, for $\alpha\in(1,2)$ and $\nu$ itself a random permuton.
We believe a continuum construction similar to the one presented here is possible, by considering a $\alpha$-stable tree, with an independent copy of $\nu$ at each branching point, driving the reordering of the (countably infinite number of) branches stemming from that point. 

The structure of the paper is as follows. \Cref{BP_sec:defs} contains various definitions that will be needed in the rest of the paper, notably the definition of $\mu^p$ and a characterization through its finite-dimensional marginals that highlights the link with the signed excursion. \Cref{BP_sec:varphi} contains the proof of \cref{BP_thm:main}, along with some facts about the random function $\varphi_{e,S}$ that are reused later. \Cref{BP_sec:onedim,BP_sec:ss,BP_sec:expectation} are respectively devoted to the proofs of \cref{BP_thm:dimension,BP_thm:ss,BP_thm:expectation}, and \Cref{BP_sec:f} to the one of \cref{BP_thm:f}.

\section{Definitions}
\label{BP_sec:defs}
First we set a few notations : if $x_1,\ldots,x_k$ are strictly comparable elements of an ordered set $(E,\leq)$, then $\rank_\leq(x_1,\ldots,x_k)$ is the permutation $\alpha$ such that $\alpha(i)<\alpha(j) \iff x_i<x_j$ for every $1\leq i,j \leq k$. The sequence $(x_{\alpha^{-1}(1)}<\ldots<x_{\alpha^{-1}(k)})$ is called the order statistic of $(x_1,\ldots,x_k)$ and denoted $(x_{(1)}<\ldots<x_{(k)})$.
\subsection{Marginals of a permuton}
In this section we want to give a tractable definition of the random permuton $\mu^p$. This will take the form of a characterization through its finite-dimensional marginals, which we define now.
If $k\geq 1$ and $\mu$ is a random permuton, let $\subperm_k(\mu) = \rank(Y_1,\ldots Y_k) \circ \rank(X_1,\ldots X_k)^{-1} \in \perms_k$, where conditionally on $\mu$, the $(X_i,Y_i)$ for $i\in \llbracket 1,k\rrbracket$ are independent and distributed according to $\mu$. 
Then the distribution of $\subperm_k(\mu)$ is called the \textit{$k$-dimensional marginal} of $\mu$. 
The interest of this definition lies in the following result, which is an extension of the main theorem of \cite{hoppen} to sequences of random permutations.
\begin{proposition}[theorem 2.2 of \cite{main2}]
	\label{BP_prop:randompermuton}
	Let $\sigma_n$ is a sequence of (possibly random) permutation whose size goes to infinity. The following are equivalent
	\begin{enumerate}
		\item As $n\to\infty$, $\mu_{\sigma_n}$ converges in distribution to some random permuton $\mu$.
		\item For every $k\geq 1$, the uniform pattern of length $k$ in $\sigma_n$, denoted $\sigma_n^{(k)}$, converges in distribution, as $n\to\infty$ to some random permutation $\rho_k\in \perms_k$.
	\end{enumerate} 	
	In this case, the law of $\mu$ is characterized by $\subperm_k(\mu) \stackrel d= \rho_k$ for $k\geq 1$.
\end{proposition}

This is indeed the result used by \cite{main}, \cite{main2} and \cite{main3} to prove permuton convergence. 
As a result, the distribution of $\subperm_k(\mu^p)$ for every $k$ is obtained as follows (see \cite[prop. 9.1]{main} and \cite[def. 5.1]{main2})
\begin{definition}\label{BP_prob:bassino1}
	The permuton $\mu^p$ is determined by the relations
	\begin{equation}\label{BP_eq:caractmup1}
	\forall k\geq 1,\quad \subperm_k(\mu^p) \stackrel d= \perm(t_{k,p}),
	\end{equation}
	where $t_{k,p}$ is a uniform binary tree with $k$ leaves whose internal vertices are decorated with i.i.d. signs of bias $p$.
\end{definition}
In the rest of the section, we make apparent a connection with the signed Brownian excursion.

\subsection{A few facts about excursions}
We start by constructing a measurable enumeration as defined in \cref{BP_def:mesenum}. Let $(p_i,q_i)_{i\in \bb N}$ be a fixed enumeration of $\mathbb Q^2 \cap [0,1]$. Let $g$ be a CRT excursion.
For $i\geq 1$, define $w_i = \min \{t\in [p_i,q_i] : g(t) = \min_{[p_i,q_i]} g\}$, $i_0 = 0$, and for $k\geq 1$, set recursively
\[i_k = \inf \{i> i_{k-1}, w_i\in (p_i,q_i)\text{ , }w_i \notin \{ w_1, \ldots, w_{i_{k-1}}\}\}.\]
Finally, for $k\in \bb N$, set $b_k(g) = w_{i_k}$. 

\begin{lemma} This construction defines a measurable enumeration.\end{lemma}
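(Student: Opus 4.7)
The plan is to verify the three conditions (ME1), (ME2), (ME3) of Definition~\ref{def:mesenum} in turn. The combinatorial content sits in (ME1); the measurability statements (ME2) and (ME3) are relatively routine once the construction is understood.

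For (ME1), I first observe that whenever the filter accepts an index $i$, meaning $w_i(g) \in (p_i, q_i)$, the point $w_i(g)$ is automatically an inner local minimum of $g$: it is the leftmost argmin of $g$ on the compact interval $[p_i, q_i]$, lying in its open interior, so $[p_i, q_i]$ itself witnesses the local minimality. The filter condition $w_i \notin \{w_1,\ldots,w_{i_{k-1}}\}$ then ensures that the $b_k(g)$ are pairwise distinct. For surjectivity, I invoke \ref{crtbinary}, which (as recalled in the excerpt just before Definition~\ref{def:mesenum}) forces every inner local minimum to be strict. Given such a $b$, there exists $\epsilon>0$ with $g(b)<g(x)$ on $[b-\epsilon, b+\epsilon]\setminus\{b\}$; any rational pair $(p_i,q_i)$ with $b-\epsilon<p_i<b<q_i<b+\epsilon$ then yields $w_i=b\in(p_i,q_i)$. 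There are infinitely many such indices, and, since $(i_k)$ is strictly increasing and the filter excludes only previously-seen values, one of them is eventually selected by the recursion.

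For (ME2) I equip $\CRTexc$ with the Borel $\sigma$-algebra induced by uniform convergence on $[0,1]$. The map $g\mapsto \min_{[p_i,q_i]}g$ is continuous, and the leftmost-argmin map $g\mapsto w_i(g)$ is Borel measurable by a standard approximation (one writes $\{w_i(g)\leq t\}$ as a countable intersection of events comparing minima over rational sub-intervals). The recursion defining $i_k$ only involves countably many such comparisons and equality tests, so $g\mapsto i_k(g)$ is Borel with values in $\mathbb N$, and hence $g\mapsto b_k(g)=w_{i_k(g)}(g)$ is Borel as well. For (ME3), given $(g,u,v)$ one checks measurably whether $\min_{[u,v]}g$ is attained at a unique point $t^*\in(u,v)$ which is a strict local minimum of $g$, using countable comparisons of minima over rational sub-intervals of $[u,v]$. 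When it is, the sought index is the unique $i\in\mathbb N$ with $b_i(g)=t^*$, recovered as a countable disjunction of the Borel tests $b_i(g)=t^*$ provided by (ME2); when no such $t^*$ exists, the function returns $\infty$. This yields the required Borel measurability on $\CRTexc\times[0,1]^2$.

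The main obstacle I expect is the surjectivity half of (ME1): one must argue with care that no inner local minimum $b$ is excluded indefinitely by the filter. The concrete worry is that $b$ might coincide with an earlier $w_j$ that was \emph{not} selected (because $w_j$ happened to fall at an endpoint of $[p_j,q_j]$), which could in principle cause the filter to block $b$ forever. This is handled by exhibiting, among the infinitely many acceptable indices $i$ with $w_i=b\in(p_i,q_i)$, one small enough to precede any problematic $w_j$; combined with the strict local minimality of $b$ and the density of rationals, this shows the recursion must at some step pick up the value $b$. All remaining steps amount to bookkeeping with continuous functions and countable Boolean combinations of Borel sets.
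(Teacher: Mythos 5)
Your plan matches the paper's: verify (ME1)--(ME3) in turn, with (ME2) and (ME3) being measurability bookkeeping. For (ME2) the paper likewise just observes that $b_k = w_{i_k}$ is a composition of Borel maps, and for (ME3) it writes down an explicit formula very close in spirit to the test you describe, using \ref{crtbinary} to guarantee uniqueness of the interior argmin. Those parts are fine.

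The gap is in the surjectivity half of (ME1), precisely the point you flag as the main obstacle. Your proposed fix — to exhibit, among the acceptable indices $i$ with $w_i=b\in(p_i,q_i)$, ``one small enough to precede any problematic $w_j$'' — cannot work as stated. Every index $i$ with $w_i = b$, acceptable or problematic, satisfies $i\geq j^*$, where $j^*$ is the \emph{minimal} index with $w_{j^*}=b$. So if $j^*$ is itself problematic (i.e.\ $w_{j^*}=p_{j^*}$ or $w_{j^*}=q_{j^*}$, which forces $b$ to be rational), there is no acceptable index preceding it, and the moment some $i_m\geq j^*$ is selected before the first acceptable index for $b$ is reached, the value $b$ enters the blocked set $\{w_1,\ldots,w_{i_m}\}$ and stays there at every later step. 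You assert the worry is ``handled,'' but the proposed mechanism simply cannot handle it. To close this gap you would need either to rule out the scenario (e.g.\ argue that for any fixed enumeration of $\mathbb Q^2$ and any CRT excursion, no selected $i_m$ can land in $[j^*,j^{**})$, which is not obvious), to restrict to excursions without rational inner local minima (a full-measure event for the Brownian excursion but not a property of all CRT excursions), or to reinterpret the exclusion set as $\{w_{i_1},\ldots,w_{i_{k-1}}\}=\{b_1,\ldots,b_{k-1}\}$, i.e.\ only the values already selected, in which case surjectivity is immediate from the argument you give for the case where the minimal index is acceptable. For what it is worth, the paper's own proof is equally terse here — it asserts that every inner local minimum appears in $(w_i)$ and that the filter produces an injective subsequence of inner local minima, without explicitly arguing that the filter cannot kill a value forever — so identifying the delicate point is to your credit; but claiming to resolve it with an argument that does not apply is a genuine error.
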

\begin{proof}
	It is immediate that all inner local minima will appear in the sequence $(w_i)_i$. The way the subsequence $(b_i)_i$ of $(w_i)_i$ is chosen guarantees that only inner local minima appear, and only once, in $(b_i)_i$.
	
	Measurability of $g\mapsto b_i(g)$ for every $i$ follows from that of $g\mapsto w_i(g)$ and $k\mapsto i_k$.
	
	To prove \ref{BP_mesenum3} we see that thanks to \cref{BP_crtbinary}, the function $\CRTexc \times [0,1]^2\to \bb N \cup \{\infty\}$
	\[(g,x,y) \mapsto \min \left\{ i \in \bb N, g(b_i(g)) = \min_{[x,y]}g \text{ and } b_i(g)\in (x,y) \text{ and } \min_{[x,y]}g < \min(g(x),g(y)) \right\}\] 
	is a measurable functional that maps $(g,x,y)$ to $i\in \bb N$ whenever $b_i$ is the point in $(x,y)$ that is the only global minimum of $g$ on $[x,y]$, and $\infty$ if no such $i\in \bb N$ exists.
\end{proof}

We now collect a few facts about CRT excursions. In \cref{BP_sec:intro_f} we saw a that such functions encode continuous trees. So we borrow the vocabulary of trees in a way that is coherent with this encoding: the $x\in[0,1]$ which are not one-sided local minima are called \textit{leaves of $g$}. The $b_i$ for $i\in \bb N$ are called \textit{branching points of $g$} and are identified with $\bb N$. Set
\begin{align*}
a_i&=\sup\{t<b_i : g(t)=g(b_i)\},\\
c_i&=\inf\{t> b_i : g(t)=g(b_i)\};\\
h_i&=g(b_i)=g(c_i)=g(a_i).
\end{align*}
By definition, for $x\in (a_i,b_i) \cup (b_i,c_i)$, $g(x)\geq h_i$, defining two subexcursions at respectively the left and the right of $b_i$. 
We collect an immediate consequence of \ref{BP_crtbinary}, which states that these subexcursions are nested, with a binary tree structure (which comes from that of $\mathcal T_g$).
\begin{lemma}\label{BP_lem:genealogyofbp}
	For every $i,j$ either $[a_i,c_i] \subset [a_j,c_j]$ or $[a_j,c_j] \subset [a_i,c_i]$ or $[a_i,c_i]\cap [a_j,c_j]=\emptyset$. 
	
	Furthermore,  if $[a_j,c_j] \subset [a_i,c_i]$ , then either $j=i$, $[a_j,c_j] \subset (a_i,b_i)$  or $[a_j,c_j] \subset (b_i,c_i)$.
\end{lemma}

If $x<y$ are $g$-comparable, the $b_i$ in which $g$ reaches its minimum between $x$ and $y$ is called the\textit{ most recent common ancestor} of $x$ and $y$. We extend this notion to branching points: if $[a_i,c_i] \cap [a_j,c_j] = \emptyset$, then $b_i$ and $b_j$ are $g$-comparable. We can always assume by symmetry that $b_i<b_j$ and call \textit{most recent common ancestor} of $i$ and $j$ the $k\in \bb N$ such that $[a_i,c_i] \subset (a_k,b_k)$ and $[a_j,c_j] \subset (b_k,c_k)$.

\subsection{Extraction of permutations and trees from a signed excursion.}
Let $(g,s)$ be a signed excursion. Recall that $x$ and $y$ are $g$-comparable if the minimum of $g$ on $[x,y]$ is reached at a unique point, and that point $b$ is a strict local minimum with $b\in(x,y)$. We start by collecting elementary facts on comparability.
\begin{lemma} Let $g$ be a CRT excursion and $s$ a sequence of signs.
	\begin{enumerate}
		\item Two leaves of $g$ are always $g$-comparable. Hence almost every pair of points in $[0,1]$ is $g$-comparable.
		\item The relation $\lhd_g^s$ is a strict partial order.
		\item For almost every $x,y\in[0,1]$, 
		\[x\lhd_g^s y \implies \varphi_{g,s}(x)\leq \varphi_{g,s}(y).\]
	\end{enumerate}
	\label{BP_lem:comparability}
\end{lemma} 
\begin{proof}
	The first claim is immediate: between two leaves, the minimum of the function $g$ cannot be reached at the endpoints and consequently is reached at some unique point, which is a inner minimum of $g$. 
	
	It is clear by definition that $\lhd_g^s$ is antisymmetric.
	To show transitivity suppose $x \lhd_g^s y \lhd_g^s z$. Let $b_i$ (resp. $b_j$)be the most recent common ancestor of $x$ and $y$ (resp. $y$ and $z$).
	Since $[a_i,c_i] \cap [a_j,c_j]$ contains $y$, it is nonnempty and by \cref{BP_lem:genealogyofbp}, either $[a_i,c_i] \subset [a_j,c_j]$ or the symmetric case. Let us treat only the first one.
	\begin{itemize}
		\item In the case $i=j$, then $x$ and $z$ must be on the same side of $b_i$, opposite $y$. Since $x\lhd_g^s y$, then $z\lhd_g^s y$, which is impossible.
		\item In the case $[a_i,c_i] \subset (a_j,b_j)$, then $x,y\in(a_j,b_j)$ and $z\in (b_j,c_j)$.
		\item  In the case $[a_i,c_i] \subset (b_j,c_j)$, then $x,y\in(b_j,c_j)$ and $z\in (a_j,b_j)$
	\end{itemize}
	In these last two cases, $x$ and $y$ are on the same side of $b_j$, opposite $z$. Since $y \lhd_g^s z$, then $x\lhd_g^s z$ too. This proves transitivity.
	
	The third claim is an immediate consequence of the first two.
\end{proof}

If $x_1,\ldots x_n$ are points of $[0,1]$, pairwise $g$-comparable, denote by $x_{(1)} < \ldots < x_{(n)}$ their order statistic (for the usual order on $[0,1]$). We then define \[\Perm_{g,s}(x_1,\ldots x_n) = \rank_{\lhd_g^s}(x_{(1)},\ldots,x_{(n)}).\]
Observe for instance \cref{BP_fig:eandf}. In this instance, $\Perm_{e,S}(t_1,\ldots,t_4) = (3214)$.

To understand the structure of these permutations, let us define the (signed) trees extracted from a (signed) excursion. Following Le Gall \cite{legall2005}, when $g$ is a CRT excursion and $t_1<\ldots<t_k$ are pairwise $g$-comparable\footnote{The definition there is stated differently and covers any continuous function $g$ and choice of points $t_1,\ldots t_k$}, the discrete plane tree with edge-lengths $\tau(g,t_1,\ldots, t_k)$ is constructed recursively as follows: 
\begin{itemize}
	\item If $k=1$, then $\tau(g,t_1)$ is a leaf labeled $t_1$.
	\item If $k\geq 2$, then the minimum of $g$ on $[t_1,t_k]$ is reached at a strict local minimum $b_i$ for some $i$, and there is $j \in \llbracket 2,k\rrbracket$ such that $\{t_1,\ldots t_{j-1}\} \subset (a_i,b_i)$, and $\{t_j,\ldots t_k\} \subset (b_i,c_i)$. Then $\tau(g,t_1,\ldots,t_k)$ is a root labeled $i$, spanning two subtrees $\tau(g,t_1,\ldots,t_{j-1})$ and $\tau(g,t_j,\ldots,t_k)$.
\end{itemize}
This yields a binary tree whose internal vertices are put in correspondence with branching points of $g$. Then, if $(g,s)$ is a signed excursion, we set $\tau^\pm(g,s,t_1,\ldots t_k)$ to be the tree $\tau(g,t_1,\ldots t_k)$, to which we add, at each internal node labeled $i$, the sign $s_i$. The following observation is capital: (recall the definition of $\perm$ from \cref{BP_sec:intro_separable})
\begin{observation} \label{BP_obs:permPerm}
	For any signed excursion $(g,s)$ and $g$-comparable $x_1,\ldots,x_n$, \[\Perm_{g,s}(x_1,\ldots x_n) = \perm(\tau^\pm(g,s,x_{(1)},\ldots,x_{(n)})).\]
\end{observation}

Going back to the example of \cref{BP_fig:eandf}, we see that $\tau^\pm(e,S,t_1,\ldots t_4)$ is the tree \includegraphics{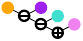}, whose associated permutation is indeed $(3214)$.

If $U_1, \ldots, U_k$ are independent uniform random variables in $[0,1]$, then they are almost surely pairwise $g$-comparable. We recall that the signed Brownian excursion $(e,S)$ is built by taking $e$ to be a normalized Brownian excursion, and $S$ an independent i.i.d. sequence of signs of bias $p$. Then a consequence of \cite[Theorem 2.11]{legall2005} is that the tree $\tau^\pm(e,S,U_{(1)},\ldots U_{(k)})$ is a uniform binary tree with $k$ leaves, independently decorated with i.i.d. signs of bias $p$. From \cref{BP_prob:bassino1} and \cref{BP_obs:permPerm} follows a new characterization of $\mu^p$, which we use in this paper.
\begin{proposition}\label{BP_prop:bassino}
	The permuton $\mu^p$ is determined by the relations
	\begin{equation}\label{BP_eq:caractmup}
	\forall k\geq 1,\quad \subperm_k(\mu^p) \stackrel d= \Perm_{e,S} (U_1,\ldots U_k).
	\end{equation}
\end{proposition}
\begin{remark}
	\label{BP_rk:cesar}
	This connection with the Brownian excursion was present in \cite{main} for $p=1/2$. The main result of that paper actually goes further: the conditional distribution of the l.h.s. given $\mu^{1/2}$ equals (in distribution) the conditional distribution of the r.h.s given $(e,S)$, jointly for all $k$ (see \cite[thm. 1.6]{main} and its proof). This indeed strongly hinted at the existence of a direct construction of $\mu^{1/2}$ from $(e,S)$, made explicit in the present paper.
\end{remark}

\section{The function $\varphi$}
\label{BP_sec:varphi}
\Cref{BP_thm:main} follows from the next two propositions.
\begin{proposition}
	If $g$ is a CRT excursion and $s$ a sequence of signs, then $(g,s,t) \mapsto \varphi_{g,s}(t)$ and $(g,s) \mapsto \mu_{g,s}$ are measurable.
	Furthermore, $\varphi_{g,s}{}_*\Leb = \Leb$, hence $\mu_{g,s}$ is a permuton.
\end{proposition}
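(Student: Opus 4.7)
The plan has two parts: measurability, then the measure-preserving property.

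For measurability, I will first argue that $(g,s,u,t)\mapsto \mathbf{1}[u\lhd_g^s t]$ is jointly measurable. By property \ref{mesenum3} of the measurable enumeration, the index $i$ of the unique minimizer of $g$ on $[u\wedge t,u\vee t]$ (when it is an inner local minimum lying in the open interval) is a measurable function of $(g,u,t)$; composing with the coordinate projection $s\mapsto s_i$ and tracking the sign of $t-u$ expresses $\mathbf{1}[u\lhd_g^s t]$ as a measurable function. Fubini then yields measurability of $\varphi_{g,s}(t)=\int_0^1 \mathbf{1}[u\lhd_g^s t]\,du$ in $(g,s,t)$, and measurability of $(g,s)\mapsto \int f\,d\mu_{g,s}=\int_0^1 f(t,\varphi_{g,s}(t))\,dt$ for every bounded continuous $f$ on $[0,1]^2$, which is the sought measurability of $\mu_{g,s}$ into the space of probability measures endowed with the weak topology.

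For the measure-preserving statement, I plan to use the moment method: since $\varphi_{g,s}$ takes values in $[0,1]$ and the uniform law on $[0,1]$ is determined by its moments, it suffices to check that
\[
\int_0^1 \varphi_{g,s}(t)^k\,dt=\frac{1}{k+1}\qquad \text{for all } k\geq 0.
\]
Expanding the power and applying Fubini rewrites the left-hand side as
$\Prob(U_1\lhd_g^s T,\ldots,U_k\lhd_g^s T)$
for i.i.d.\ uniform $U_1,\ldots,U_k,T$ on $[0,1]$. By property \ref{crtasleaf}, these $k+1$ variables are almost surely leaves of $g$, hence pairwise $g$-comparable; Observation \ref{obs:permPerm} then guarantees that $\lhd_g^s$ restricts to a strict total order on the $k+1$ sampled points (this is precisely the content of $\Perm_{g,s}$ being a well-defined permutation). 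By exchangeability of the i.i.d.\ sample, each coordinate is equally likely to be the maximum in this total order, so $T$ is the maximum with probability $1/(k+1)$, giving the desired identity.

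Once $\varphi_{g,s}{}_*\Leb=\Leb$ is established, the second marginal of $\mu_{g,s}=(\Id,\varphi_{g,s})_*\Leb$ is uniform and the first marginal is trivially uniform, so $\mu_{g,s}$ is a permuton. I expect the main conceptual obstacle to be the tension between the local relation $\lhd_g^s$ (which is not transitive on all of $[0,1]$) and its global total-order behaviour on finite samples of leaves; this is exactly what Observation \ref{obs:permPerm} supplies, and all the other steps reduce to Fubini and the exchangeability symmetry.
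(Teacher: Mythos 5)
Your proof is correct. The measurability step is essentially identical to the paper's (Property \ref{mesenum3} plus Fubini). For the measure-preserving step, you and the paper both exploit exchangeability of an i.i.d.\ uniform sample, but the technical route differs: the paper looks at the empirical rank statistic $U_{1,k}=\frac{1}{k-1}\#\{i:Z_i\lhd_g^s Z_1\}$, observes that by exchangeability it is uniform on a discrete grid, and then applies the (conditional) law of large numbers to identify its $k\to\infty$ limit as $\varphi_{g,s}(Z_1)$; you instead compute the moments $\int_0^1\varphi_{g,s}(t)^k\,dt=\Prob(T\text{ is $\lhd_g^s$-maximal among }T,U_1,\ldots,U_k)=1/(k+1)$ and appeal to moment-determinacy on $[0,1]$. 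The moment version is a bit cleaner in that it avoids the LLN-in-the-limit step, but both arguments hinge on the same subtlety, which you correctly flag: exchangeability alone gives only that the events ``the $j$-th sample is $\lhd_g^s$-maximal'' are equiprobable, and one needs $\lhd_g^s$ to restrict to a genuine \emph{total order} (not merely a tournament) on a.s.\ every finite sample of leaves so that exactly one of those events occurs. Citing Observation \ref{obs:permPerm} for this is the right move; the paper's argument needs the same fact to conclude that the rank $U_{1,k}$ is uniform on its grid.
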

\begin{proof}
	For the measurability, remark that $((g,s,t),u) \mapsto \idf[u\lhd_g^s t]$ is a measurable function, as a result of \cref{BP_mesenum3}.
	Then Fubini's theorem implies that its partial integral over $u$ is a measurable function of $(g,s,t)$.
	
	Now we only have to prove that $\varphi_*\Leb = \Leb$.
	Let $(Z_i)_{i\geq 1}$ be independent uniformly distributed random variables in $[0,1]$.
	For $k\geq 2$, let $U_{1,k} = \frac 1 {k-1} \#\{i\in\llbracket 2,k\rrbracket{  :} Z_i \lhd_g^s Z_1\}$ and $U_1 = \lim_{k\to\infty} U_{1,k}$.
	We can apply the law of large numbers conditionally on $Z_1$ to the sequence $\idf_{Z_2 \lhd^s_g Z_1}, \idf_{Z_3 \lhd_g^s Z_1}, \ldots$ (which is i.i.d given $Z_1$) to show that this limit is well defined and equal almost surely to $\Leb\{t :t\lhd_g^s Z_1\} = \varphi(Z_1)$.
	This means that $U_1$ has distribution $\varphi_*\Leb$.
	On the other hand, by exchangeability of the $Z_i$, the $U_{1,k}$ are uniform over $\{\frac 1{k-1}, \ldots, \frac {k-1}{k-1}\}$ so the distribution of the limit $U_1$ must be uniform.
	This means precisely that $\varphi_*\Leb = \Leb$.
\end{proof}

\begin{proposition}
	The Brownian separable permuton $\mu^p$ is distributed like $\mu_{e,S}$.
\end{proposition}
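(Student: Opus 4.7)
The strategy is to invoke \cref{prop:bassino}, which characterizes $\mu^p$ via its finite-dimensional marginals. Hence it suffices to prove that for every $k\geq 1$,
\[
\subperm_k(\mu_{e,S})\stackrel{d}{=}\Perm_{e,S}(U_1,\dots,U_k),
\]
where $U_1,\dots,U_k$ are i.i.d.\ uniform on $[0,1]$, independent of $(e,S)$. Since $\mu_{e,S}=(\Id,\varphi_{e,S})_*\Leb$, the pairs $(U_i,\varphi_{e,S}(U_i))$ are i.i.d.\ samples from $\mu_{e,S}$ conditionally on $(e,S)$, and unpacking the definition of $\subperm_k$ gives $\subperm_k(\mu_{e,S})=\rank\bigl(\varphi_{e,S}(U_{(1)}),\dots,\varphi_{e,S}(U_{(k)})\bigr)$ almost surely. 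On the other hand, \cref{obs:permPerm} gives $\Perm_{e,S}(U_1,\dots,U_k)=\rank_{\lhd_e^S}(U_{(1)},\dots,U_{(k)})$. The distributional equality will therefore follow from the almost-sure statement: for all pairs $i\neq j$, $\varphi_{e,S}(U_i)<\varphi_{e,S}(U_j)$ if and only if $U_i\lhd_e^S U_j$.

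A countable intersection over pairs reduces the problem to $k=2$, and by symmetry it is enough to show: almost surely on $\{U_1<U_2\}\cap\{U_1\lhd_e^S U_2\}$, one has $\varphi_{e,S}(U_1)<\varphi_{e,S}(U_2)$. Let $b$ denote the MRCA of $U_1$ and $U_2$, so that the associated sign is $\oplus$; then $U_1\in(a_b,b)$ and $U_2\in(b,c_b)$. The next step is to split the integral
\[
\varphi_{e,S}(U_2)-\varphi_{e,S}(U_1)=\int_0^1\bigl(\idf[Z\lhd_e^S U_2]-\idf[Z\lhd_e^S U_1]\bigr)\,dZ
\]
according to whether $Z\in[0,a_b]\cup[c_b,1]$, $Z\in(a_b,b)$ or $Z\in(b,c_b)$. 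Outside $[a_b,c_b]$, \cref{lem:genealogyofbp} forces the MRCA of $Z$ with $U_1$ to coincide with that of $Z$ with $U_2$ (both must be a common ancestor of $b$ in the tree), so the integrand vanishes. On $(a_b,b)$, any leaf $Z$ has MRCA $b$ with $U_2$, so $Z\lhd_e^S U_2$ and $\idf[Z\lhd U_2]=1$; symmetrically, on $(b,c_b)$ one has $U_1\lhd_e^S Z$ and hence $\idf[Z\lhd U_1]=0$. Combining these identifications gives
\[
\varphi_{e,S}(U_2)-\varphi_{e,S}(U_1)=\int_{(a_b,b)}\idf[U_1\lhd_e^S Z]\,dZ+\int_{(b,c_b)}\idf[Z\lhd_e^S U_2]\,dZ\geq 0.
\]

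The main obstacle is promoting this weak inequality to a strict one almost surely. The first summand above vanishes precisely when $U_1$ is maximal for the $\lhd$-preorder induced on the sub-excursion of $e$ over $[a_b,b]$. To control this I would condition on $(e,S)$ and on the identity $b=b_i$ of the MRCA; given this conditioning, $U_1$ is uniform on $(a_i,b_i)$, and the first summand equals $(b_i-a_i)-\psi_i(U_1)$ with $\psi_i(u):=\Leb\{Z\in(a_i,b_i):Z\lhd_e^S u\}$. After the obvious affine rescaling, the excursion of $e$ on $[a_i,b_i]$ is itself a signed CRT excursion (properties \ref{crtdensitybp}--\ref{crtasleaf} are inherited from those of $e$), so the measure-preservation statement proved just above for $\varphi_{e,S}$ applies verbatim to $\psi_i$. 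This shows $(\psi_i)_*\Leb_{(a_i,b_i)}=\Leb_{(0,b_i-a_i)}$; in particular $\psi_i^{-1}(\{b_i-a_i\})$ is Lebesgue-null, so the first summand is almost surely positive on the conditioning event. Summing over the countably many branching points $b_i$ and combining with the symmetric argument for the second summand completes the proof.
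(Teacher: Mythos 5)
Your proof is correct and uses the same overall strategy as the paper: appeal to \cref{prop:bassino} and show that, almost surely, the rank of $\varphi_{e,S}(U_1),\dots,\varphi_{e,S}(U_k)$ coincides with the $\lhd_e^S$-rank of $U_1,\dots,U_k$. Your MRCA-based decomposition of $\varphi_{e,S}(U_2)-\varphi_{e,S}(U_1)$ into contributions from $(a_b,b)$, $(b,c_b)$ and the complement of $[a_b,c_b]$ is, up to null sets, exactly the verification behind the paper's one-line claim that $x\lhd_e^S y$ implies $\varphi_{e,S}(x)\leq\varphi_{e,S}(y)$. Where you diverge is in promoting $\leq$ to $<$: you condition on the MRCA $b_i$ and invoke measure-preservation of a rescaled sub-excursion map $\psi_i$. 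The paper's route is shorter and avoids sub-excursions entirely: the preceding proposition already gives $\varphi_{e,S}{}_*\Leb=\Leb$, so $\Leb(\varphi_{e,S}^{-1}\{y\})=0$ for every $y$, and hence $\varphi_{e,S}(U_1)\neq\varphi_{e,S}(U_2)$ almost surely because $U_1,U_2$ are i.i.d.\ uniform; combined with the weak inequality this yields strictness directly. Your sub-excursion detour, while sound (it does require the small check that $e|_{[a_i,b_i]}-h_i$, suitably rescaled, inherits \ref{crtdensitybp}--\ref{crtasleaf} and that the induced order agrees with the restriction of $\lhd_e^S$), is therefore unnecessary given what is already in hand.
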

\begin{proof}
	By definition of $\mu_{g,s}$, $\subperm_k(\mu_{e,S})$ can be realized as $\rank(\mathbf Y) \circ \rank(\mathbf X)^{-1}$ where $X_1,\ldots X_k$ are independent uniform in $[0,1]$ and $Y_i = \varphi_{e,S}(X_i)$ for $i\in \llbracket 1,k\rrbracket$
	Since $x\lhd_e^{S} y$ implies $\varphi_{e,S}(x) \leq \varphi_{e,S}(y)$, and moreover since the $Y_i$ are almost surely distinct, then almost surely $\subperm_k(\mu_{e, S}) = \Perm_{e,S}(X_1,\ldots X_k)$.
	According to \cref{BP_prop:bassino}, this property characterizes $\mu^p$ among permutons.
\end{proof}

We now collect a few results about the excursion and the function $\varphi$.
The first one states that $[0,1]$ can almost be covered by a union of small subexcursions.
\begin{lemma}\label{BP_lem:coverbysubtrees}
	Let $g$ be a CRT excursion, and $\delta>0$, $\epsilon>0$.
	There exists a finite $I \subset \bb N$ such that the $([a_i,c_i])_{i\in I}$ are disjoint, $c_i-a_i\leq \epsilon$ for every $i$, and $\Leb(\bigsqcup_{i\in I} [a_i,c_i]) = \sum (c_i-a_i)>1-\delta$.
\end{lemma}
\begin{proof}
	Let $x$ be a leaf of the excursion $g$.
	Let $x_0<x$ be another leaf.
	Define recursively $b_{k_n}$ to be the most recent common ancestor of  $x_n$ and $x$, and $x_{n+1}$ to be a leaf in $(\max \{b_{k_n},x-\tfrac 1 n\},x)$.
	This is possible by density of the leaves.
	Then necessarily $x\in[a_{k_n}, c_{k_{n}}]$ and $a_{k_n}$ converges to $x$. 
	Hence $g(c_{k_n})=g(a_{k_n})$ converges to $g(x)$, which implies that $c_{k_n}-a_{k_n}$ converges to $0$ (otherwise $x$ couldn't be a leaf).
	Hence there must be a $i$ such that $|c_i-a_i|\leq \epsilon$ and $x\in[a_i,c_i]$.
	
	We deduce that $\bigcup_{i : c_i-a_i \leq \epsilon} [a_i,c_i]$ has measure 1.
	So a finite union can be found with measure $\geq 1-\delta$.
	Now thanks to \cref{BP_lem:genealogyofbp}, this union can be readily rewritten as a disjoint union.
\end{proof}

Now we want to characterize how the function $\varphi_{g,s}$ behaves on a pair of sibling subexcursions defined by an interval of the form $[a_i,c_i]$.
Set $a'_i=\varphi_{g,s}(a_i)$, $c'_i = a'_i +c_i - a_i$, $b'_i = a'_i + (b_i - a_i)\idf[s_i = \oplus] + (c_i - b_i)\idf[s_i = \ominus]$. 
The numbers $a'_i,b'_i,c'_i \in [0,1]$ can be interpreted as the equivalent of $a_i,b_i,c_i$ for the shuffled order. 
\begin{lemma}\label{BP_lem:localization}
	For $i\in \mathbb N$, we have
	\begin{align*}
	&\text{if }t\in [a_i,b_i]\text{ and }s_i = \oplus\text{, then }
	&\varphi_{g,s}(t) = a'_i + \Leb\{x\in [a_i,b_i] : x \lhd_g^s t\} \in [a'_i,b'_i].\\
	&\text{if }t\in [b_i,c_i]\text{ and }s_i = \oplus\text{, then } 
	&\varphi_{g,s}(t) = b'_i + \Leb\{x\in [b_i,c_i] : x \lhd_g^s t\} \in [b'_i,c'_i].\\
	&\text{if }t\in [a_i,b_i]\text{ and }s_i = \ominus\text{, then } 
	&\varphi_{g,s}(t) = b'_i + \Leb\{x\in [a_i,b_i] : x \lhd_g^s t\} \in [b'_i,c'_i].\\
	&\text{if }t\in [b_i,c_i]\text{ and }s_i = \ominus\text{, then } 
	&\varphi_{g,s}(t) = a'_i + \Leb\{x\in [b_i,c_i] : x \lhd_g^s t\} \in [a'_i,b'_i].
	\end{align*}
	If $t \in [0,a_i) \cup (c_i,1]$, then 
	\[
	\varphi_{g,s}(t) = \Leb\{x\in [0,a_i) \cup (c_i,1] : x \lhd_g^s t\}
	+ \idf[a_i\lhd_g^s t](c_i - a_i) \in [0,a'_i]\cup[c'_i,1] 
	\]
\end{lemma}
\begin{proof}
	We prove the first and last equalities, as the others have a symmetric proof.
	If $s_i = \oplus$, $t\in[a_i,b_i]$ and $u$ is a leaf, then $u\lhd^s_g t$ if and only if $u \in [0,a_i) \cup (c_i,1]$ and $u\lhd_g^s a_i$, or $u \in [a_i,b_i]$ and $u\lhd_g^s t$. The first claim follows by taking the measure of such $u$.
	
	For the last equality, we see that if $t\in[0,a_i)\cup(c_i,1]$ and $u\in[a_i,c_i]$, then $u\lhd^s_g t$ if and only if $a_i \lhd_g^s t$.
\end{proof}

\begin{lemma}\label{BP_lem:genealogyofbp2}
	If $[a_j,c_j] \subset (a_i,b_i)$, then either $s_i = \oplus$ and $[a'_j,c'_j] \subset [a'_i,b'_i]$, or $s_i = \ominus$ and $[a'_j,c'_j] \subset [b'_i,c'_i]$.
	
	If $[a_j,c_j] \subset (b_i,c_i)$, then either $s_i = \oplus$ and $[a'_j,c'_j] \subset [b'_i,c'_i]$, or $s_i = \ominus$ and $[a'_j,c'_j] \subset [a'_i,b'_i]$.
	
\end{lemma}
\begin{proof}
	The four claims have a symmetrical proof, hence we only prove the first.
	If $s_i=\oplus$ and $[a_j,c_j]\subset (a_i,b_i)$, then the previous lemma implies readily $a'_i \leq a'_j$.
	We need to prove $c'_j \leq b'_i$, that is $a'_j + c_j - a_j \leq a'_i + b_i - a_i$, which is equivalent to $a'_j - a'_i \leq a_j-a_i+b_i-c_j$.
	This is exactly the inequality of measures derived from the inclusion $\{x,a_i\lhd_g^s x\lhd_g^s a_j\} \subset [a_i,a_j]\sqcup[c_j,b_i]$
\end{proof}

Now we can prove \cref{BP_cor:cvoffunctions}.
\begin{proof}[Proof of \cref{BP_cor:cvoffunctions}] 
	We consider the  Kolmogorov distance between probability measures, which is the uniform distance on the bivariate CDFs ($d_{K}(\nu,\pi) = \sup_{0\leq x,y \leq 1}|\nu-\pi|([0,x]\times [0,y])$). 
	We use the fact that convergence of permutons is metrized by $d_{K}$ \cite[lemma 5.3]{hoppen}, and the following result: 
	\begin{lemma}\label{BP_lem:dks}
		If $\sigma \in \perms_n$, 
		$d_{K} (\mu_{\sigma}, (\Id,\varphi_{\sigma})_*\Leb) \leq \tfrac 2 n$
	\end{lemma}
	\begin{proof}
		It is enough to notice that both CDFs coincide on points whose coordinates are entire multiples of $1/n$ and use the fact that CDFs of permutons are $1$-Lipschitz \cite[eq. 7]{hoppen}
	\end{proof}
	All together, this implies $(\Id,\varphi_{\sigma_n})_*\Leb \xrightarrow d (\Id,\varphi_{e,S})_*\Leb$.
	With the Skorokhod coupling we can assume without loss of generality, that the convergence is in fact almost sure.
	Let $\epsilon$ and $\delta$ be positive real numbers, and apply \cref{BP_lem:coverbysubtrees}.
	Then		
	\begin{multline*}	
	\Leb(x:|\varphi_{\sigma_n}(x) - \varphi_{e,S}(x)|>\epsilon) 
	\leq \Leb(x:x\notin \bigsqcup_{i\in I} [a_i,c_i])\\
	+ \Leb(x : \exists i\text{ s.t. } x\in [a_i,c_i], \varphi_{\sigma_n}(x)\notin [a'_i,c'_i])
	\end{multline*}
	The first term is smaller than $\delta$ by construction, and the second term converges to $\Leb(x : \exists i\text{ s.t. } x\in [a_i,c_i], \varphi_{e,S}(x) \notin [a'_i,c'_i]) = 0$ because of the narrow convergence of $(\Id,\varphi_{\sigma_n})$ to $(\Id,\varphi_{e,S})$ and the Portmanteau theorem (indeed permutons put no mass on the boundary of rectangles, because they have uniform marginals).
	So for $q\geq1$,  $||\varphi_{\sigma_n} - \varphi_{e,S}||^q_{L^q}\leq  \epsilon^q + \delta + o(1)$.
	This last quantity can be made arbitrary small by choosing first $\epsilon$ and $\delta$ small enough and then $n$ large enough.
	We have proven almost sure convergence of $\varphi_{\sigma_n} \xrightarrow {L^p} \varphi_{e,S}$ in some coupling, hence the corollary. 
\end{proof}

We end this section by considering the following property of signed excursions $(g,s)$:
\begin{multline} \label{BP_eq:olt} \tag{A}
\forall i\neq j,\quad[a'_j,c'_j] \subset [a'_i,c'_i] \implies \{h_l : l\geq 1, [a'_l,c'_l]\subset[a'_i,c'_i]  \text{ and } [a'_j,c'_j]\subset [b'_l,c'_l]\} \\
\text{and } \{h_l : l\geq 1,  [a'_l,c'_l]\subset[a'_i,c'_i]  \text{ and } [a'_j,c'_j]\subset [a'_l,b'_l]\} \text{ are dense in }[h_i,h_j]
\end{multline}
It is very similar to the "order-leaf-tight" property of continuum trees defined in \cite{aldous1993}. Loosely said, it means that it is impossible to find a nontrivial ancestral path in the tree $\cali T_g$ without a density of points both on the right and on the left where a subtree is grafted. "left" and "right" are understood with regard to the shuffled order $\leq_g^s$. This is crucial to the proof of \cref{BP_thm:f}. We show that it holds almost surely in our setting.
\begin{proposition}
	\label{BP_olt}
	Let $g$ be a CRT excursion, $p \in (0,1)$ and $S$ be a random i.i.d. sequence of signs with bias $p$. Then with probability one, $(g,S)$ verifies property \eqref{BP_eq:olt}
	
\end{proposition}
\begin{proof}
	By symmetry we prove only the first claim and by countable union we fix $i$ and $j$. Let $K= \{ l \geq 1 : [a_l,c_l]\subset[a_i,c_i]  \text{ and } [a_j,c_j]\subset [b_l,c_l]\}$, and 
	$\widetilde K = \{l : l\geq 1, [a'_l,c'_l]\subset[a'_i,c'_i]  \text{ and } [a'_j,c'_j]\subset [b'_l,c'_l]\}$.
	For $y\in (h_i,h_j) \cap \bb Q$, consider $x = \sup\{t\in[a_i,a_j] : g(t) = y\}$. Then by definition $g(x) = y$ and $g(t)>y$ for $t>x$. Consider a sequence of leaves $x_n \nearrow x$ and the minimum $b_{k_n}$ of $g$ between $x_n$ and $a_i$. Then necessarily $k_n\in K$ and $x_n < b_{k_n} < x$. So $h_{k_n} \to y$.
	
	Now with probability one a subsequence $(k'_n)_n$ of $(k_n)_n$ can be found with $s_{k'_n} = \oplus$ for every $n$. Then \cref{BP_lem:genealogyofbp2} implies that $k'_n \in \widetilde K$, and $h_{k'_n} \to y$.
	By countable union over $y$ we have shown that $\overline{\{h_l, l\in \widetilde K\}}$ countains $(h_i,h_j) \cap \bb Q$. So it contains $[h_i,h_j]$ from which the proposition follows.
\end{proof}

An immediate consequence of property \eqref{BP_eq:olt} is the following improvement on \cref{BP_lem:genealogyofbp2}, with strict inclusions.
\begin{lemma}\label{BP_lem:genealogyofbp2mieux} Suppose $(g,s)$ verifies \eqref{BP_eq:olt}. Let $i\neq j$.
	
	If $[a_j,c_j] \subset (a_i,b_i)$, then either $s_i = \oplus$ and $[a'_j,c'_j] \subset (a'_i,b'_i)$, or $s_i = \ominus$ and $[a'_j,c'_j] \subset (b'_i,c'_i)$.
	
	If $[a_j,c_j] \subset (b_i,c_i)$, then either $s_i = \oplus$ and $[a'_j,c'_j] \subset (b'_i,c'_i)$, or $s_i = \ominus$ and $[a'_j,c'_j] \subset (a'_i,b'_i)$.
	
	If $[a_i,c_i]\cap [a_j,c_j] = \emptyset$, then $[a'_i,c'_i] \cap [a'_j,c'_j] = \emptyset$.	
\end{lemma}


\section{The support of the permuton}
\label{BP_sec:onedim}

\Cref{BP_thm:dimension} follows readily from the two propositions of this section.

\begin{proposition}
	\label{BP_hausdorff1}
	For every signed excursion $(g,s)$, $\mu_{g,s}$ has Hausdorff dimension $1$ and its $1$-dimensional Hausdorff measure is $\leq \sqrt{2}$.
\end{proposition}

\begin{proof}
	Let $\pi_1, \pi_2$ denote the two coordinate projections of the unit square.
	For $U\subset [0,1]^2$, we write $\width(U) = \sup \pi_1(U) - \inf \pi_1(U)$ and $\height(U) = \sup \pi_2(U) - \inf \pi_2(U)$.
	
	We start by showing that $\dim_H(\supp(\mu))\geq 1$.
	If $\pi_1$ is the projection of the unit square to its first coordinate, then $\pi_1(\supp(\mu)) = [0,1]$, otherwise $\mu$ couldn't have a uniform marginal.
	We conclude with the following lemma, which is immediate from the definition of Hausdorff dimension:
	
	\begin{lemma}\label{BP_haussdimproj}
		If $\theta : (E,d_E) \to (F,d_F)$ is a contraction, then for $X\subset E$, $\dim_H(X)>\dim_H(\theta(X))$
	\end{lemma}
	
	To prove the upper bound, we apply \cref{BP_lem:coverbysubtrees} for some choice of $\epsilon > \delta >0$.
	Let $I$ be the set of indices provided by the lemma.
	Let $J = \{k: \exists i, j \in I , [a_i,c_i] \subset (a_k,b_k), [a_j,c_j] \subset (b_k,c_k)\}$.
	Let $K = I \sqcup J$
	We have the following fact, which is a direct consequence of the nested structure of the $[a_i,c_i]$.
	\begin{fact}\label{BP_fct:iril}
		For every $i \in J$, there exists an $i_l\in K$ such that for every $j\in K$, $[a_j,c_j] \subset [a_i,b_i]$ implies $[a_j,c_j] \subset[a_{i_l},c_{i_l}]\subset [a_i,b_i]$.
		Similarly for every $i \in J$, there exists be an $i_r\in K$ such that for every $j\in K$, $[a_j,c_j] \subset [b_i,c_i]$ implies $[a_j,c_j] \subset[a_{i_r},c_{i_r}]\subset [b_i,c_i]$.
		Also there exists $\star \in J$ such that for every $k\in K$, $[a_k, c_k] \subset [a_\star, c_\star]$.
	\end{fact}		
	We can define the following subsets of the unit square, which we use to cover $\supp(\mu_{g,s})$:		
	\begin{align*}
	A_i &= ([a_i,a_{i_l}] \cup [c_{i_l},b_{i}])\times([a'_i,a'_{i_l}]
	\cup  [c'_{i_l},b'_{i}])\\
	& \cup ([b_i,a_{i_r}] \cup [c_{i_r},c_{i}])\times([b'_i,a'_{i_r}]
	\cup  [c'_{i_r},c'_{i}]) 
	& \text{ if }i \in J\text{ and }s_i = \oplus  \\
	A_i &= ([a_i,a_{i_l}] \cup [c_{i_l},b_{i}])\times([b'_i,a'_{i_l}]
	\cup  [c'_{i_l},c'_{i}])\\
	& \cup ([b_i,a_{i_r}] \cup [c_{i_r},c_{i}])\times([a'_i,a'_{i_r}]
	\cup  [c'_{i_r},b'_{i}]) 
	& \text{ if }i \in J\text{ and }s_i = \ominus \\
	A_i & = [a_i,c_i]\times[a'_i,c'_i] & \text{ if }i \in I\\
	A_0 &=([0,a_\star] \cup  [c_{\star},1]) \times ([0,a'_\star] \cup  [c'_{\star},1])
	\end{align*}
	By construction and \cref{BP_fct:iril}, $\bigcup_{i\in K \cup \{0\}} \pi_1(A_i) = [0,1]$, and \cref{BP_lem:localization} implies that for $x \in \pi_1(A_i)$, $(x,\varphi_{g,s}(x)) \in A_i$. 
	This one has:
	\begin{equation}
	\label{BP_cover}
	(\Id,\varphi_{g,s})[0,1]\subset  \bigcup_{i\in K \cup \{0\}} A_i.
	\end{equation}
	\begin{figure}[htb]
		\centering
		\includegraphics{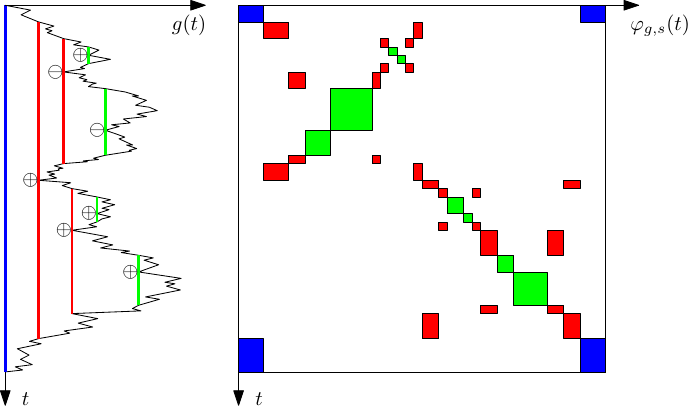}
		\caption{$A_0$ in blue, $A_i$ for $i\in I$ in green, and $A_i$ for $i\in J$ in red.}
	\end{figure}
	The rest of the proof is devoted to rewriting the right-hand side of \eqref{BP_cover} as an union of sets in which we control the sum of diameters.
	Now, for $i \in I$, $\diam(A_i) = \diam([a_i,c_i]\times[a'_i,c'_i]) = \sqrt{2}(c_i-a_i)$.
	We deduce that
	\begin{equation}\label{BP_eqn:diamA_i}
	\sum_{i\in I} \diam(A_i) \leq \sqrt 2. 
	\end{equation}
	For $i\in J$, 
	$A_i$ is the union of 8 rectangles $A_i^1,\ldots A_i^8$. 
	We have that
	\begin{align*}
	\sum_{j=1}^8 \width(A_i^j) &= 2[(c_i-a_i)-(c_{i_l}-a_{i_l}) - (c_{i_r}-a_{i_r})]\\
	\sum_{j=1}^8 \height (A_i^j)&= 2[(c'_i-a'_i)-(c'_{i_l}-a'_{i_l}) - (c'_{i_r}-a'_{i_r})].
	\end{align*}
	And both these quantities are equal and their value is 2 $\Leb(\pi_1(A_i))$.
	Similarly, $A_0$ is the union of 4 rectangles $A_0^1,\ldots, A_0^4$ whose widths and heights both sum to $2\Leb(\pi_1(A_0))$. Hence	
	\begin{align}\label{BP_eqn:diamA_j}
	\sum_{j=1}^4 \diam(A_0^j) + \sum_{i\in J} \sum_{j=1}^8\diam(A_i^j) 
	&\leq \sum_{j=1}^4 (\width + \height)(A_0^j)  
	+ \sum_{i\in J} \sum_{j=1}^8(\width + \height)(A_i^j) \nonumber\\
	&= 4\Leb(\pi_1(A_0))
	+4 \sum_{i\in J} 
	\Leb(\pi_1(A_i)) \nonumber\\
	&= 4 \Leb([0,1]\setminus \bigcup_{i\in I}[a_i,c_i]) 
	\leq 4\delta
	\end{align}
	By taking the closure and rewriting the right-hand side in \cref{BP_cover}, we get 
	\begin{equation}\label{BP_eq:the_cover}
	\supp (\mu_{g,s}) 
	\subset \overline{(\Id,\varphi_{g,s})[0,1]}
	\subset \left(\bigcup_{i\in I} A_i \right)
	\cup \left(\bigcup_{j=0}^4 A_0^j\right)
	\cup \left(\bigcup_{i\in J} \bigcup_{j=1}^8 A_i^j  \right)
	\end{equation}
	Summing \eqref{BP_eqn:diamA_i} and \eqref{BP_eqn:diamA_j} shows that the sum of diameters in the cover \eqref{BP_eq:the_cover} can't exceed $4\delta + \sqrt 2$.
	Moreover, each square and rectangle in the cover has diameter bounded by $\sqrt{2}\epsilon$.
	This implies that $\supp(\mu)$ has 1-dimensional Hausdorff measure bounded above by $\sqrt 2$.
\end{proof}

\begin{proposition}
	If $S$ is an i.i.d sequence of nondeterministic signs, then $\supp(\mu_{g,S})$ is almost surely totally disconnected.
\end{proposition}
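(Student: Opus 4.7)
The plan is to reduce to the almost-sure event that $(g,S)$ satisfies property \eqref{eq:olt} (provided by \cref{olt}), so that the strict inclusions of \cref{lem:genealogyofbp2mieux} apply throughout, and then to show that every connected subset of $\supp(\mu_{g,S})$ is a singleton (equivalently, that $\supp(\mu_{g,S})$ is totally disconnected).

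The key preliminary step is a description of the vertical fibers $\supp(\mu_{g,S})\cap(\{x\}\times[0,1])$. For $x$ in the co-countable set of genuine leaves of $g$, $\varphi_{g,S}$ is continuous at $x$, so the fiber is the singleton $\{(x,\varphi_{g,S}(x))\}$. At every branching point $b_i$, taking the limits $t\to b_i^-$ within $[a_i,b_i]$ and $t\to b_i^+$ within $[b_i,c_i]$ in the first two formulas of \cref{lem:localization} shows that both one-sided limits of $\varphi_{g,S}$ equal $b'_i$, hence the fiber is still the singleton $\{(b_i,b'_i)\}$. At each of the countably many remaining one-sided minima, the fiber contains at most two accumulation points.

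Now suppose for contradiction that $C\subseteq\supp(\mu_{g,S})$ is a connected subset containing two distinct points $P_1=(x_1,y_1)$ and $P_2=(x_2,y_2)$. The degenerate case $x_1=x_2$ should be ruled out by combining the fiber analysis above with a symmetric horizontal-slice argument applied to the measure-preserving inverse of $\varphi_{g,S}$. Assuming $x_1<x_2$, for each branching point $b_i\in(x_1,x_2)$ the unique fiber point $(b_i,b'_i)$ must belong to $C$, for otherwise $C\cap([0,b_i)\times[0,1])$ and $C\cap((b_i,1]\times[0,1])$ would form a disconnection of $C$. Iterating over the dense collection of branching points in $[\alpha,\beta]:=\pi_1(C)$ and taking closures one concludes that, in fact, $C=\supp(\mu_{g,S})\cap([\alpha,\beta]\times[0,1])$.

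To derive the contradiction, I would pick a branching point $b_i$ with $[a_i,c_i]\subset(\alpha,\beta)$ (which exists by \cref{lem:coverbysubtrees}). By \cref{lem:localization} the image of $[a_i,c_i]$ lies in the L-shaped union of the two sub-rectangles $[a_i,b_i]\times[a'_i,b'_i]$ and $[b_i,c_i]\times[b'_i,c'_i]$, which meet only at the corner $(b_i,b'_i)$; the image of the complement $[0,a_i)\cup(c_i,1]$ lies in the four-corner region described by the last formula of \cref{lem:localization}. Combined with the strict inclusions of \cref{lem:genealogyofbp2mieux} applied to descendant branching points, this should let one isolate a clopen (in $\supp(\mu_{g,S})$) neighborhood of some point of $C$ inside one sub-rectangle that avoids $C$ in the other, contradicting the connectedness of $C$. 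The main technical obstacle lies in verifying openness of this candidate clopen set: the complement image can accumulate on the boundary of the sub-rectangles, and it is precisely the strict inclusions provided by \eqref{eq:olt} that guarantee that at every generation, descendant sub-rectangles stay strictly away from the shared corner $(b_i,b'_i)$, upgrading the single-point cut into a genuine clopen decomposition.
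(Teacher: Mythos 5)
The ``key preliminary step'' is wrong, and the error propagates through the rest of the argument. Take $s_i=\oplus$ for concreteness. \cref{lem:localization} says $\varphi_{g,s}(t)=a'_i+\Leb\{x\in[a_i,b_i]:x\lhd_g^s t\}$ for $t\in[a_i,b_i]$, but this does \emph{not} tend to $b'_i=a'_i+(b_i-a_i)$ as $t\to b_i^-$: that would require $\Leb\{x\in[a_i,b_i]:x\lhd_g^s t\}\to b_i-a_i$, i.e.\ almost all of $[a_i,b_i]$ eventually lies below $t$ in the shuffled order. The function $\varphi_{g,s}$ is measure preserving on $[a_i,b_i]$ but is \emph{not} monotone there; whether $x\lhd_g^s t$ holds is decided by the sign at the most recent common ancestor of $x$ and $t$, and a macroscopic portion of $[a_i,b_i]$ sits above $t$. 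Concretely, fix any $m$ with $[a_m,c_m]\subset(a_i,b_i)$; for $t\in(c_m,b_i)$ close enough to $b_i$, the whole block $[a_m,c_m]$ has a single common ancestor with $t$, so it is entirely $\lhd t$ or entirely $\rhd t$ according to one sign. Hence $\varphi_{g,s}(t)$ is either $\le b'_i-(c_m-a_m)$ or jumps by at least $c_m-a_m$ as that sign flips while $t$ moves, and in no reasonable situation does $\varphi_{g,s}(t)\to b'_i$. So the fiber over $b_i$ is not $\{(b_i,b'_i)\}$; in fact it is unclear $(b_i,b'_i)$ is even in the support. The conclusion you draw from this false fiber description --- that if $(b_i,b'_i)\notin C$ then $C$ is disconnected, hence $C$ must contain a whole vertical slab --- is therefore unsupported, and the final step (``this should let one isolate a clopen neighborhood\dots'') is left as a sketch precisely where the real difficulty lies: controlling how descendant $L$-shapes accumulate near the corner needs a quantitative sign argument, not just the pointwise strict inclusions of \cref{lem:genealogyofbp2mieux}.

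The paper's route is quite different and avoids fiber analysis entirely. It reuses the covering sets $A_i$ from the Hausdorff-dimension proof but first enlarges the index sets $I,J$ to $\bar I,\bar J$ by inserting new branching points so that, at every remaining internal node $i\in\bar J$, the signs alternate: $s(b_{i_l})=s(b_{i_r})\neq s(b_i)$. The i.i.d.\ nondegenerate hypothesis on $S$ is used exactly once, to guarantee (a.s.) that such sign-alternating refinements can be found, via the same exhaustion argument as \cref{lem:coverbysubtrees}. With the alternating constraint, the $L$-shaped pieces cannot chain together across generations, and a direct decomposition of the cover $C=\bigcup A_i$ into pieces $B_i$ shows every connected component of $C$ has diameter at most $4\epsilon+2\delta$; letting $\epsilon,\delta\to 0$ gives total disconnection. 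If you want to pursue the vertical-cut idea, you would still need an argument of this flavour to show that, at some $b_i$, a neighbourhood of the candidate cut point in the support meets only one side --- which amounts to controlling the signs along a spine, not just invoking \cref{lem:localization}.
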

\begin{proof}
	We re-use the notations of the last proof, with $\epsilon > \delta > 0$. We now show that almost surely, we can build sets $\bar I\supset I$ and $\bar J\supset J$ such that 
	\begin{enumerate}
		\item the statement of \cref{BP_fct:iril} is still true when $J$ is replaced by $\bar J$ and $K$ by $\bar K = \bar I \sqcup \bar J$,
		\item for all $i\in \bar I$, $c_i-a_i \leq \epsilon$,
		\item $\Leb([0,1] \setminus \bigsqcup_{i\in \bar I}  [a_i,c_i]) <\delta$,
	\end{enumerate}
	with the following added constraint:
	\begin{equation}\label{BP_eqn:alternate}
	\forall	i\in J,\quad s(b_{i_r}) =s(b_{i_l}) \neq s(b_i).
	\end{equation}
	This is done by adding successively indices to $I$ in order to create new branching points in between two branching points of the same sign.
	Condsider $i\in J$ and its left child $i_l$, with $s_i= s_{i_l}=\epsilon$.
	We can build, as in the proof of \cref{BP_lem:coverbysubtrees}, an infinite sequence  $(b_{r_n})_n$ such that $[a_{r_n}, c_{r_n}] \subset [a_i, b_i]$ and $[b_{r_k},c_{r_k}] \supset [a_{i_l}, c_{i_l}]$. 
	Almost surely, one of the $r_n$, which we denote $j=j(i,i_l)$, is such that $s_j\neq \epsilon$. 
	We can then find, by the same reasoning, a $k=k(j(i,i_l))$ such that $[a_k,c_k] \subset [a_j,c_j]$ and $s_k = \epsilon$. We proceed similarly for every $i\in J$ such that $s_i= s_{i_r}$. We can now set 
	\begin{align*}\bar I &= I \cup \{k(i,i_l) : i\in J, s_i \neq s_{i_l}\}\cup \{k(i,i_r) : i\in J, s_i \neq s_{i_r}\}\\
	\bar J &= J \cup \{j(i,i_l) : i\in J, s_i \neq s_{i_l}\}\cup \{j(i,i_r) : i\in J, s_i \neq s_{i_r}\}.
	\end{align*}
	By construction, \cref{BP_fct:iril} applies to $\bar I$ and $\bar J$, and \eqref{BP_eqn:alternate} is verified.
	
	Now we can define the sets $(A_i)_{i\in \bar K \cup \{0\}}$ as in the previous proof, and we still have 
	\begin{equation*}
	\supp \mu_{e, S} \subset C = \bigcup_{i\in \bar K \cup \{0\}} A_i.
	\end{equation*}
	We will show that the diameter of any connected component of C is almost surely bounded by $4\epsilon + 2\delta$. 
	This is enough to show that $\supp(\mu_{g,S})$ is totally disconnected.
	
	For $x\in C$, let us denote by $\cali C(x)$ the connected component of $C$ containing $x$, and for $X\subset C$, set $\cali C(X) = \cup_{x\in X}\cali C(x)$. We now set, for $i\in \bar I$, $B_i = \cali C(A_i)$, for $i\in \bar J$ $B_i = \cali C(A_i) \setminus \cali C(A_{i_l}) \setminus \cali C(A_{i_r})$, and $B_0 = \cali C(A_0) \setminus \cali C(A_*)$. Then, immediate induction yields
	\begin{equation*}
	C = \bigsqcup_{i\in \bar K \cup \{0\}} B_i.
	\end{equation*}
	Now remark that the sets $B_i$ were obtained by inclusion and exclusion of full connected components of $C$. Hence each connected component of $C$ appears as a connected component of one of the $B_i$, that we now consider.
	
	It turns out (see \cref{BP_fig:disc}) that for $i\in \bar I$, $B_i$ has only one connected component, and its diameter is bounded above by $4\epsilon+2\delta$.
	For $i\in \bar J$, $B_i$ has three connected components, whose diameter is bounded above by $2\delta$. For $i=0$, $B_0$ has two connected components, and their diameter is also bounded above by $2\delta$.
	\begin{figure}[htb]
		\centering
			\includegraphics{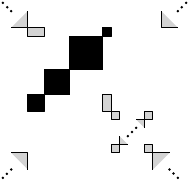}\quad\quad\quad
			\includegraphics{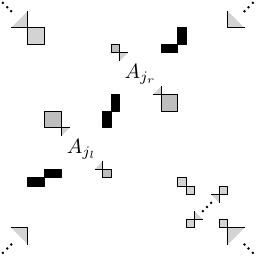}\quad\quad\quad
			\includegraphics{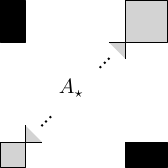}
		\caption{Left: $B_i$ for $i\in I$, in the case $s_i = \oplus$, $i=j_l$ for some $j$. Center: $B_j$ for $j\in J$, in the case $s_j = \oplus$, $j=j'_l$ for some $j'$. Right: $B_0$, in the case $s_\star = \oplus$.\label{BP_fig:disc}}
	\end{figure} 
\end{proof}

\section{Self-similarity}
\label{BP_sec:ss}
Given a CRT excursion $g$ and one of its branching points $b$ , one can build three subexcursions by cut-and-pasting, which encode the three connected components of $\cali T_g\setminus \{p_g(b)\}$. The goal of this section is do the same procedure on signed excursions, and observe the consequences on the associated permutons. This will allow us to prove \cref{BP_thm:ss} in a "reversed" fashion: we start from $\mu$, build $\mu_1$, $\mu_2$ and $\mu_3$ by cutting along a suitably chosen branching point, as to be able to use a result of Aldous \cite{aldous1994} and identify the distribution and relative sizes of the subexcursions.

Let $(g,s)$ be a signed excursion. Given $\ibar\in \bb N$, we can obtain 3 excursions by looking at the values of $g$ on $[a_{\ibar},b_\ibar]$, $[b_\ibar,c_\ibar]$ and $[0,a_\ibar] \sqcup [c_\ibar,1]$. More precisely, following \cite{aldous1994}, we define
\begin{equation}\label{BP_eq:definitiondeltas}
\Delta_0 = 1-c_\ibar +a_\ibar, \Delta_1 = b_\ibar-a_\ibar, \Delta_2 = c_\ibar - b_\ibar, X_0 =\frac  {a_\ibar}{\Delta_0}, Y_0 = \frac {a'_\ibar}{\Delta_0}, \beta = s_\ibar.
\end{equation}
Given these constants, we can define the contractions $\theta_k,\eta_k,\zeta_k$ for $k\in\{0,1,2\}$, as in \eqref{BP_eq:definitiontheta}, and
\begin{equation}
g_k = \frac {1}{\sqrt{\Delta_k}}g \circ \eta_k, \quad k\in\{0,1,2\}.
\end{equation}

Because each $\eta_k$ is a piecewise affine function, it pulls back the strict local minima of $g$ that are in the interior of $\Image(\eta_k)$ onto strict local minima of $g_k$. This is made explicit in the following result:
\begin{proposition}\label{BP_prop:bourbaki}
	For $k\in \{0,1,2\}$, there is an injective map $\vartheta_k : \bb N \to \bb N$, such that
	\[\forall i \in \bb N,\quad\eta_k(b_i(g_k)) = b_{\vartheta_k(i)}(g) . \]
	Moreover, the $\vartheta_k(\bb N)$, for $k\in \{0,1,2\}$, form a partition of  $\bb N \setminus \{\ibar\}$.
	Finally, for $k\in \{0,1,2\}$, the map $(g,\ibar,i) \mapsto \vartheta_k(i)$ is measurable.
	\begin{proof}
		We set $\vartheta_k(i) = \min \{j \in \bb N : \eta_k(b_i(g_k)) = b_{j}(g)\}$, and the measurability claim follows from measurability of $(i,g) \mapsto b_i(g)$, $(\ibar,g) \mapsto \eta_k$ and $(\ibar,g) \mapsto g_k$. The other claims are immediate by construction and from the definition of a measurable enumeration.
	\end{proof}
\end{proposition}

We can now transport the signs of $g$ onto signs of the $g_k$ by setting $s^k_i = s_{\vartheta_k(i)}$ for $k \in \{0,1,2\}$ and $i\in \bb N$. A result of this construction is the following crucial observations:
\begin{observation}
	For $x<y\in[0,1]$, and $k\in \{0,1,2\}$, $x\lhd_{g_k}^{s } y$ if and only if $\eta_k(x)\lhd_g^s\eta_k(y)$.
\end{observation}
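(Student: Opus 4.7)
The plan is to proceed by cases on $k$. For $k = 1$ (and symmetrically $k = 2$), the map $\eta_k$ is an increasing affine bijection of $[0,1]$ onto $[a_\ibar, b_\ibar]$ (resp.\ $[b_\ibar, c_\ibar]$), and $g_k$ is simply $g$ restricted to that subinterval, pulled back by $\eta_k$ and rescaled vertically by $\Delta_k^{-1/2}$. Consequently, the minimum of $g_k$ on $[x,y]$ is attained at $z$ with a given multiplicity if and only if the minimum of $g$ on $[\eta_k(x), \eta_k(y)]$ is attained at $\eta_k(z)$ with the same multiplicity. By \cref{prop:bourbaki}, $b_i(g_k) = \eta_k^{-1}(b_{\vartheta_k(i)}(g))$, so $x, y$ are $g_k$-comparable with most recent common ancestor $b_i(g_k)$ if and only if $\eta_k(x), \eta_k(y)$ are $g$-comparable with most recent common ancestor $b_{\vartheta_k(i)}(g)$. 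The definition $s^k_i = s_{\vartheta_k(i)}$ ensures the signs at these ancestors coincide, so the two $\lhd$-relations agree.

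For $k = 0$, the map $\eta_0$ is affine with slope $\Delta_0$ on each of the pieces $[0, X_0]$ and $(X_0, 1]$, carrying them onto $[0, a_\ibar]$ and $(c_\ibar, 1]$ respectively. When $x, y$ both lie in the same piece, the argument reduces to the previous case. The only nontrivial subcase is $x < X_0 < y$. Here I would first establish, using $g \geq h_\ibar$ on $[a_\ibar, c_\ibar]$ with equality at the endpoints, the identity
\[
\sqrt{\Delta_0}\, \min_{[x,y]} g_0 \;=\; \min_{[\eta_0(x), a_\ibar] \cup [c_\ibar, \eta_0(y)]} g \;=\; \min_{[\eta_0(x), \eta_0(y)]} g.
\]
If the common value is strictly less than $h_\ibar$, its minimizer lies outside $[a_\ibar, c_\ibar]$ at some strict local minimum $b_j(g)$, which corresponds via $\eta_0^{-1}$ to a strict local minimum $b_i(g_0) \neq X_0$ with $\vartheta_0(i) = j$; uniqueness is preserved, the signs transfer, and the conclusion follows as in the $k=1$ case.

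The main obstacle is the edge case where the minimum equals $h_\ibar$. On the $g$ side, the minimum is then attained simultaneously at $a_\ibar$, $b_\ibar$, $c_\ibar$, hence not uniquely, so $\eta_0(x), \eta_0(y)$ fail to be $g$-comparable. The matching fact on the $g_0$ side will be that $X_0$ is never a strict local minimum of $g_0$: if $a_\ibar$ were an inner local minimum of $g$, then by \ref{crtbinary} and $g(a_\ibar) = h_\ibar = g(b_\ibar)$ one would get $a_\ibar = b_\ibar$, contradicting $a_\ibar < b_\ibar$. So $a_\ibar$ is not an inner local minimum of $g$; combined with $g > h_\ibar$ just to the right of $a_\ibar$, this forces $g$ to take values strictly below $h_\ibar$ arbitrarily close to $a_\ibar$ from the left, and symmetrically near $c_\ibar$ from the right. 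Translating back through $\eta_0$, $g_0$ dips below $g_0(X_0) = h_\ibar/\sqrt{\Delta_0}$ on both sides of $X_0$, so $X_0$ is not a strict local minimum of $g_0$ and $x, y$ are not $g_0$-comparable either; both sides of the equivalence are therefore vacuously false.
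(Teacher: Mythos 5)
The paper states this as a bare observation with no proof (``A result of this construction is the following crucial observations''), so there is no paper argument to compare against; your job is to supply the missing details, and your proof does so correctly. The case analysis ($k=1,2$ as a direct affine pullback via \cref{prop:bourbaki} and the definition $s^k_i = s_{\vartheta_k(i)}$; $k=0$ split according to whether $x,y$ straddle $X_0$) is the natural one and the reasoning is sound, including the key verification that $a_\ibar$ and $c_\ibar$ are never inner local minima of $g$ by \ref{crtbinary}.

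Two small remarks that would tighten the $k=0$ part. First, your own observation that $g_0$ dips strictly below $g_0(X_0)=h_\ibar/\sqrt{\Delta_0}$ arbitrarily close to $X_0$ on both sides already shows that, for $x<X_0<y$, $\min_{[x,y]}g_0$ is \emph{always} strictly less than $h_\ibar/\sqrt{\Delta_0}$; so the ``edge case'' you single out is in fact vacuous, and you could drop that paragraph entirely rather than arguing that both sides of the equivalence are false. Second, the boundary cases $x=X_0<y$ (and symmetrically $x<y=X_0$) fall through your dichotomy ``same piece'' vs.\ ``$x<X_0<y$'': here $\eta_0(x)=a_\ibar$ and $\eta_0(y)>c_\ibar$, so the pair still straddles the branch point on the $g$ side and needs the same treatment as the straddling case. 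The argument you give there covers it (the minimum is again strictly below $h_\ibar$ since $\eta_0(y)>c_\ibar$), but the case split as written nominally omits it.
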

\begin{observation}\label{BP_obs:sk_measurable}
	The map $(g,\ibar,(s_i)_{i\in \bb N}) \mapsto s^k_i$ is measurable for every $i\in \bb N$ and $k\in \{0,1,2\}$
\end{observation}

Now we want to use \cref{BP_lem:localization} to show that our function $\varphi_{g,s}$ can be cut out into rescaled copies of $\varphi_{g_k,s^k}$, which translates immediately in termes of measures.
\begin{proposition}\label{BP_cor:rescaledcopies}
	For $\ibar \in \bb N$, $k\in\{0,1,2\}$ and $t\in[0,1]$,
	\begin{equation}\label{BP_eqn:selfsim}
	\varphi_{g,s} \circ \eta_k(t) = \zeta_k \circ \varphi_{g_k,s^k}(t).
	\end{equation}
	As a consequence,
	\[\mu_{g,s} = \sum_{k=0}^2 \Delta_k \cdot (\theta_k{}_* \mu_{g_k,s^k}). \]
\end{proposition}

\begin{proof}
	Let us prove \eqref{BP_eqn:selfsim} for $k=0$.
	\begin{align*}
	\varphi_{g,s}(\eta_0(t)) 
	&= \Leb\{x\in [0,a_\ibar) \cup (c_\ibar,1] : x \lhd_g^s \eta_0(t)\} 
	+ \idf[a_\ibar\lhd_g^s t](c_\ibar - a_\ibar)  \\
	&= \Leb\{x\in [0,a_\ibar) \cup (c_\ibar,1] : x \lhd_g^s \eta_0(t)\}\\ 
	&\quad+(c_\ibar - a_\ibar)\idf\left[\Leb\{x\in [0,a_\ibar) \cup (c_\ibar,1] : x \lhd_g^s \eta_0(t)\} 
	> a'_\ibar \right]  \\
	&=\Delta_0\Leb\{y\in [0,1] : y \lhd_{e_0}^{s^0} t\} + (1-\Delta_0)\idf\left[ \Delta_0\Leb\{y\in [0,1] : y \lhd_{e_0}^{s^0} t\} > \Delta_0 Y_0\right]\\
	&=\zeta_0(\varphi_{e_0,s^0}(t)) 
	\end{align*}
	Where the first two equalities come from \cref{BP_lem:localization} and the third is the result of the change of variable $x=\eta_0(y)$. Now, for $k=1$, 
	\begin{align*}
	\varphi_{g,s}(\eta_1(t)) 
	&= a'_\ibar + (b'_\ibar-a'_\ibar)\idf[s_\ibar = \ominus] + \Leb\{x\in [a_\ibar,b_\ibar] : x \lhd_g^s \eta_1(t)\} \\
	&= \Delta_0Y_0 +  \Delta_2\beta + (b_\ibar-a_\ibar)\Leb\{y\in [0,1] : y \lhd_{g_1}^{s^1} t\} \\
	&=\zeta_1(\varphi_{g_1,s^1}(t))
	\end{align*}
	where the first equality comes from \cref{BP_lem:localization} and the second is the result of the change of variable $x=\eta_1(y)$. The case $k=2$ is similar.
\end{proof}

This is all we need to show \cref{BP_thm:ss}.
\begin{proof}[Proof of \cref{BP_thm:ss}]
	If $e$ is an Brownian excursion, and $X_l<X_r$ are reordered uniform independent random variables in $[0,1]$, independent of $e$, then almost surely there is a $\ibar$ such that $b_\ibar = \argmin_{[X_l,X_r]} e$.
	Define  $\Delta_0, \Delta_1, \Delta_2,X_0,Y_0,\beta$ as in \eqref{BP_eq:definitiondeltas}.
	This allows us to define the $\theta_k$ as in \eqref{BP_eq:definitiontheta} and the $e_k,s^k$ as before.
	
	A result of Aldous \cite[cor. 5]{aldous1994} states that  $e_0, e_1,e_2$ are Brownian excursions, $(\Delta_0,\Delta_1,\Delta_2)$ is a $\Dirichlet(\tfrac 1 2,\tfrac 1 2,\tfrac 1 2)$ partition of 1, and $X_0$
	is uniform in $[0,1]$, all these random variables being independent.
	
	Now, as a consequence of \cref{BP_obs:sk_measurable}, for $k\in[0,1]$ and $i \in \bb N$, $S^k_i$ is a random variable.  Given $e$ and $\ibar$, the $S^k$ for $k\in[0,1]$ and $\beta$ are permutations of disjoint subsequences of $S$. As a result, the $S^k$ and $\beta$ are independent (and independent of $(e,X_l,X_r)$), and distributed as i.i.d. sequences of signs of bias $p$.
	
	We finally set $\mu_k = \mu_{e_k,S^k}$ for $k\in\{0,1,2\}$ and need only prove
	\begin{equation}\label{BP_eq:Y_0}
	Y_0 = \varphi_{e_0,S^0}(X_0) \text{ a.s.}
	\end{equation}
	to show that the collection of random variables $((\Delta_k)_{k\in\{0,1,2\}}, (\mu_k)_{k\in\{0,1,2\}},  (X_0,Y_0), \beta)$ has the joint distribution assumed in \cref{BP_thm:ss}.  \Cref{BP_cor:rescaledcopies} then yields the theorem.
	
	Let us now prove \eqref{BP_eq:Y_0}.
	\begin{align*}
	\Delta_0 Y_0 = a'_i = \Leb\{x\in [0,a_i) \cup (c_i,1) : x \leq_e^S a_i\}
	&= \Delta_0\Leb\{y\in [0,1] : y \leq_{e_0}^{S_0} \eta_0^{-1} (a_i)\}\\
	&= \Delta_0\Leb\{y\in [0,1] : y \leq_{e_0}^{S_0} X_0\}\\
	&= \Delta_0 \varphi_{e_0,S^0}(X_0). \qedhere
	\end{align*}
\end{proof}

\begin{remark}\label{BP_rk:ag}
	As seen in the proof, \cref{BP_thm:ss} is a direct consequence of the self-similarity property of the Brownian CRT \cite[thm. 2]{aldous1994}. 
	It was shown \cite{albenque2015} that this property actually characterizes the Brownian CRT in the space of measured $\bb R$-trees. 
	We believe that the arguments of Albenque and Goldschmidt can be transposed in our setting, to show that the law of $\mu^p$ is the only distribution on permutons which verifies  \eqref{BP_eq:ss}. 
	The main reason backing that claim is the following: permutons are characterized by their finite-dimensional marginals, just like measured $\bb R$-trees are determined by their reduced trees (see section 3 in \cite{albenque2015}).
\end{remark}

\section{Expectation of the permuton}
\label{BP_sec:expectation}
In this section 
we shall compute the density function of the averaged permuton $\E\mu^p$ for $p\in(0,1)$. We know that $\mu^p = \mu_{e,S}$, where $e$ is a normalized Brownian excursion and $S$ is an independent sequence of i.i.d. signs with bias $p$.
Since for fixed $(g,s)$, the measure $\mu_{g,s}$ is the distribution of the random pair $(U,\varphi_{g,s}(U))$ with $U$ uniform in $[0,1]$, then by Fubini's theorem, we get the following:
\begin{lemma}
	\label{BP_lem:expectation1}
	$\E\mu^p$ is the distribution of the random pair $(U, \varphi_{e,S}(U))$, where $e$ is a normalized Brownian excursion, $S$ is an independent sequence of i.i.d. signs with bias $p$, and $U$ is uniform, those three random variables being independent.
\end{lemma} 

Let $(B_t)_{0\leq t\leq 1}$ be a normalized Brownian bridge between $0$ and $0$.
Define its local time at $0$ as follows: for $t\in [0,1]$, set $L_t = \lim_{\varepsilon \to 0} \frac 1 {2\varepsilon} \int_0^t \idf_{0\leq |B_s| \leq \varepsilon}ds$ in probability.
Define also its right-continuous inverse $(T_l)_{l\geq 0}$.
We set  $\Delta T_l =  T_l - T_{l^-}$ for $l\geq 0$.
We suppose that each $l\geq 0$ such that $\Delta T_l >0$ is equipped with an independent sign $\epsilon_l$ with bias $p$.
We will use a result of Bertoin and Pitman \cite{BertoinPitman} to rewrite the measure $\E\mu^p$ as the distribution of some functional of $B$.
\begin{lemma}\label{BP_lem:expectation2}
	The measure $\E\mu^p$ is the distribution of $\left(\frac{ P_1+P_2}{P_1+P_2+P_3+P_4}, \frac{ P_1+P_4}{P_1+P_2+P_3+P_4}\right)$, where 
	\begin{equation}
	\label{BP_eq:P_1to4}
	\begin{gathered}
	P_1 = \textstyle{\sum_{l<L_1/2, \epsilon_l = \oplus} \Delta T_l, \quad P_2 = \sum_{l<L_1/2, \epsilon_l = \ominus} \Delta T_l} \\
	\textstyle{P_3 = \sum_{l>L_1/2, \epsilon_l = \oplus} \Delta T_l, \quad P_4 = \sum_{l>L_1/2, \epsilon_l = \ominus} \Delta T_l}
	\end{gathered}
	\end{equation}
\end{lemma}
\begin{proof}
	We will build a suitable coupling of $(e,S,U)$ on one hand, and $(B,\epsilon)$ on the other hand. Start with the bridge $B$, and set $U = T_{L_1/2}$. Define $(K_t)_{0\leq t\leq 1}$ as follows: $K_t = L_t$ for $0\leq t\leq U$ and $K_t = L_1 - L_t$ when $U\leq t\leq 1$. Theorem 3.2 of \cite{BertoinPitman} tells us that if we set $e = K+\lvert B \rvert$, then $(e,U)$ is distributed as a Brownian excursion with an independent uniform variable in $[0,1]$. Moreover, the following holds almost surely: for $0\leq t \leq U$, $K_t = \inf_{t\leq s \leq U} e_s$ and for $U\leq t\leq 1$, $K_t = \inf_{U\leq s \leq t} e_s$. Finally let $S$ be a sequence of i.i.d. signs with bias $p$, independent of $(B,e,U)$. The triple $(e,S,U)$ has the desired distribution. We can transfer some of the signs of $S$ to form the marking process $(\epsilon_l)_{l\geq 0, \Delta_l>0}$. First remark that almost surely, $U$ is not a one-sided local minimum of $e$. For $l\geq 0$ such that $\Delta T_l>0$,
	\begin{itemize}
		\item either $l<L_1/2$ and then $T_{l^-}<T_l<U$, in which case $T_l$ is an inner local minimum $b_{\imath_l}$ of $e$ for some $\imath_l \in \bb N$. We then set $\epsilon_l = S_{\imath_l}$.
		\item either $l>L_1/2$ and then $T_{l^-}<T_l<U$, in which case $T_{l^-}$ is an inner local minimum $b_{\imath_l}$ of $e$ for some $\imath_l \in \bb N$. We then set $\epsilon_l = S_{\imath_l}$.
	\end{itemize}
	The sequence $(\imath_l)_{l:\Delta T_l >0}$ is a random injection into $\mathbb N$ that solely depends on $B$. So conditional on $B$, the signs in $(\epsilon_l)_{l:\Delta T_l >0}$ are i.i.d. and of bias $p$. Then $(B,\epsilon)$ has the desired distribution.	
	
	We now show that in this coupling we have the almost sure equality $(U, \varphi_{e,S}(U)) = \left(\frac{ P_1+P_2}{P_1+P_2+P_3+P_4}, \frac{ P_1+P_4}{P_1+P_2+P_3+P_4}\right)$. Then \cref{BP_lem:expectation1} implies the present lemma. If we define 
	\begin{equation*}
	\begin{gathered}
	\textstyle{\hat P_1 = \Leb\{t : 0\leq t \leq U,t  \lhd_e^S U \},\quad \hat P_2 = \Leb\{t : 0\leq t \leq U,t  \rhd_e^S U \}}, \\ 
	\textstyle{\hat P_3 = \Leb\{t : U\leq t \leq 1,t \rhd_e^S U \}, \quad \hat P_4 = \Leb\{t : U\leq t \leq 1,t  \lhd_e^S U \}},
	\end{gathered}
	\end{equation*}
	then it is immediate that almost surely, $\hat P_1+\hat P_2+\hat P_3+\hat P_4 = 1$,  $\hat P_1 +\hat P_2 = U$ and $\hat P_1+\hat P_4 = \varphi_{e,S}(U)$.
	Now we need only show that the $P_i = \hat P_i$ for $1\leq i\leq 4$. For instance for $i=1$, we need to observe that $t\in [0,1]$ is such that $t<U$ and $t\lhd_e^S U$ if and only if there is a $b_i\in (t,U)$ such that $b_i$ is the unique minimum of $e$ on $[t,U]$ and $S_i = \oplus$. Such $b_i$ is necessarily equal to $T_l$ for some $l<\ L_1/2$ such that $T_{l^-} < t < T_l$, and then $S_i = \epsilon_l$. We have shown the following logical equivalence for $t\in [0,1]$:
	\begin{equation*}
	t \leq U \text { and }t\lhd_e^S U \iff \exists\, l<L_1/2 \text{ s.t. } T_{l^-} < t < T_l \text { and } \epsilon_l = \oplus.
	\end{equation*}
	Taking the Lebesgue measure on both sides yields $\hat P_1 = P_1$. For $i=2,3,4$, the proof is symmetric.
\end{proof}

Let $\cali U$ be the set of continuous excursions of variable length, with $R:\cali U \to \bb R^+$ denoting the length statistic. Let $N$ be the It\=o excursion measure of Brownian motion. For $\theta \geq 0$, define the measure $\Lambda^\theta(dr) = e^{-\theta r} N(R\in dr)$.
Denote by $(X_l^\theta)_{l\geq 0}$ the process of sums up to time $l$ of a Poisson point process of intensity $dt\Lambda^\theta$. This is a well-defined process because $\int \Lambda^\theta(dr)(r\wedge 1)$ is finite. We can state the following rewriting of the distribution $\E\mu^p$.
\begin{lemma}\label{BP_lem:expectation3}
	For any $\theta>0$, $\E\mu^p$ is the distribution of $\left(\frac{\cali P_1+\cali P_2} {\cali P_1+ \cali P_2+\cali P_3+\cali P_4}, \frac{ \cali P_1+\cali P_4}{\cali P_1+\cali P_2+\cali P_3+\cali P_4}\right)$, where conditional on a random variable $\lambda_Y$ with exponential distribution of parameter $\sqrt{2\theta}$, we define the variables $\cali P_1$, $\cali P_2$, $\cali P_3$ and $\cali P_4$ to be independent with $\cali P_1 \stackrel d = \cali P_3 \stackrel d = X^\theta_{p\lambda_Y/2}$ and $\cali P_2 \stackrel d = \cali P_4 \stackrel d = X^\theta_{(1-p)\lambda_Y/2}$.
\end{lemma}
\begin{proof}
	Let us reuse the notations of \cref{BP_lem:expectation2}. We make use of the results of Perman and Wellner \cite{PermanWellner}, which show that the most tractable object in terms of its excursions is not the normalized Brownian bridge, but the random-length bridge $(\beta_t)_{t\geq 0}$ defined as follows:  $\beta_t = \idf_{0\leq t \leq Y}\sqrt{Y}B_{t/Y}$ where $Y$ is a random variable of distribution $\Gamma(1/2,\theta)$ independent of $B$. Its local time $\lambda$, inverse local time $\tau$ and jump process $\Delta \tau$  are related to those of $B$ by $\lambda_t = \sqrt Y L_{t/Y}$, $\tau_l = Y T_{l/\sqrt Y}$ and $\Delta \tau_l = Y \Delta T_{l/\sqrt Y}$. The marking process $\epsilon$ can be modified accordingly by setting $\varepsilon_l = \epsilon_{l/\sqrt{Y}}$ for $l\geq 0$ such that $\Delta \tau_l >0$.
	
	Now if we set 
	\begin{equation*}
	\begin{gathered}
	\cali P_1 = \textstyle{\sum_{l<\lambda_1/2, \epsilon_l = \oplus} \Delta \tau_l, \quad \cali P_2 = \sum_{l<\lambda_1/2, \epsilon_l = \ominus} \Delta \tau_l} \\
	\textstyle{\cali P_3 = \sum_{l>\lambda_1/2, \epsilon_l = \oplus} \Delta \tau_l, \quad \cali P_4 = \sum_{l>\lambda_1/2, \epsilon_l = \ominus} \Delta \tau_l}
	\end{gathered}
	\end{equation*}
	then by construction, $\left(\frac{ P_1+P_2}{P_1+P_2+P_3+P_4}, \frac{ P_1+P_4}{P_1+P_2+P_3+P_4}\right) = \left(\frac{\cali P_1+\cali P_2} {\cali P_1+ \cali P_2+\cali P_3+\cali P_4}, \frac{ \cali P_1+\cali P_4}{\cali P_1+\cali P_2+\cali P_3+\cali P_4}\right)$.
	
	We now have to identify the joint distribution of the $\cali P_i$. It results from \cite[thm 1 and 4]{PermanWellner} that $\lambda_Y$ is distributed as an exponential random variable of parameter $\sqrt{2\theta}$, and that, conditional on $\lambda_Y$, the excursions of $\beta$ away from $0$, parametrized by the local time, form a Poisson point process of intensity $dle^{-\theta R(w)} N(dw)$ over $[0,\lambda_Y]\times \cali U$. 
	The random set $\{(l,\Delta \tau_l), l\geq 0, \Delta_l>0\}$, which is just the point process of excursion lengths, is then also Poisson with intensity $dl \Lambda^\theta(dt)$ over $[0,\lambda_Y]\times \bb R_+$. This results from the mapping property of Poisson processes.
	Now, since the marking process $(\varepsilon_l)_{l\geq 0}$ is a choice of i.i.d. marks, chosen independent of $B$, the marking property of point processes \cite[sect. 2.3]{Kingman} tells us that $\{(l,\Delta \tau_l, \varepsilon_l), l\geq 0, \Delta_l>0\}$ is itself a Poisson process of intensity $dl \Lambda^\theta(dt)(p\delta_\oplus + (1-p) \delta_\ominus)(d\varepsilon)$ over $[0,\lambda_Y]\times \bb R_+\times \{\oplus,\ominus\}$.
	
	Since they are functionals of the same Poisson process restricted to disjoint subsets, the processes $\{\Delta \tau_l, 0\leq l \leq \lambda_Y/2, \Delta_l>0, \varepsilon_l = \oplus\}$, $\{\Delta \tau_l, 0\leq l \leq \lambda_Y/2, \Delta_l>0, \varepsilon_l = \ominus\}$, $\{\Delta \tau_l,\lambda_Y/2\leq l \leq \lambda_Y, \Delta_l>0, \varepsilon_l = \oplus\}$ and $\{\Delta \tau_l,\lambda_Y/2\leq l \leq \lambda_Y, \Delta_l>0, \varepsilon_l = \ominus\}$, are independent.
	Moreover, by the mapping property, they are themselves Poisson, with respective intensity measures $\frac {p\lambda_{Y}}2\Lambda_\theta(dr)$, $\frac{(1-p)\lambda_{Y}}2\Lambda_\theta(dr)$, $\frac {p\lambda_{Y}}2 \Lambda_\theta(dr)$ and $\frac {(1-p)\lambda_{Y}}2 \Lambda_\theta(dr)$.
	The lemma follows.
\end{proof}

\begin{proof}[Proof of \cref{BP_thm:expectation}]
	By a classical argument using Girsanov's theorem\footnote{It also follows from Campbell's formula \cite[sect. 3.2]{Kingman} and \cite[ch. II.1, eq. 2.0.1]{HandbookBrownian}}, $X^\theta_l$ is distributed as the hitting time of level $l$ by a Brownian motion with positive drift $\theta$, hence its density is $
	\frac{d}{dt} \Prob(X^\theta_l \in dt) = y^\theta_l(t) = \idf_{t\geq 0}\frac{e^{-\theta t}\,l\,e^{-l^2/(2t)} }{e^{-\sqrt{2\theta}l}\sqrt{2\pi t^3}}
	$(see \cite[ch. II.1, eq. 2.0.2]{HandbookBrownian}).
	
	Then, going back to the notations of \cref{BP_lem:expectation3}, the joint density of $(\cali P_1,\cali P_2,\cali P_3,\cali P_4)$ at $(t_1,t_2,t_3,t_4) \in \bb (\bb R_+)^4$ equals
	
	\begin{align*}
	&\int_0^\infty d\lambda \sqrt{2\theta} e^{-\sqrt{2\theta}\lambda } y^\theta_{p\lambda/2}(dt_1)y^\theta_{(1-p)\lambda/2}(dt_2)y^\theta_{(1-p)\lambda/2}(dt_3)y^\theta_{p\lambda/2}(dt_4)\\
	&= \frac {\sqrt{2\theta}p^2(1-p)^2}{2^4(\sqrt{2\pi})^4} \frac{e^{-\theta(t_1+t_2+t_3+t_4)}}{(t_1t_2t_3t_4)^{3/2}}
	\int_0^\infty  \lambda^4 e^{-\lambda^2/2\left(\frac{p^2}{4t_1}+\frac{(1-p)^2}{4t_2}+\frac{p^2}{4t_3}+\frac{(1-p)^2}{4t_4}\right)}d\lambda\\
	&= \frac {\sqrt{2\theta}p^2(1-p)^2}{2^4(\sqrt{2\pi})^4} \frac{e^{-\theta(t_1+t_2+t_3+t_4)}}{(t_1t_2t_3t_4)^{3/2}}
	\frac {3\sqrt{2\pi}}{2\left(\frac{p^2}{4t_1}+\frac{(1-p)^2}{4t_2}+\frac{p^2}{4t_3}+\frac{(1-p)^2}{4t_4}\right)^{5/2}}.
	\end{align*}
	
	Now we define the random variables
	$S = \cali P_1+ \cali P_2+\cali P_3+\cali P_4$,
	$Q = \cali P_1/S$,
	$U = (\cali P_1 + \cali P_2)/S$ and
	$V = (\cali P_1 + \cali P_4)/S$. 
	According to \cref{BP_lem:expectation3}, $\E \mu^p$ is the distribution of the pair $(U,V)$.
	It follows from the Lebesgue change of variables theorem that the joint density of $(S,Q,U,V)$ at $(s,q,u,v)\in (\bb R_+ \times \bb R^+ \times [0,1] \times [0,1])$ is equal to
	\begin{equation*}
	\frac{s^3\idf_{\max(0,u+v-1)\leq q \leq \min(u,v)} \frac {3\sqrt{2\theta}p^2(1-p)^2}{2^5(\sqrt{2\pi})^3}e^{-\theta s}}{(sq\,s(u-q)\,s(1-u-v+q)\,s(v-q))^{3/2} \left(\frac{p^2}{4sq}+\frac{(1-p)^2}{4s(u-q)}+\frac{p^2}{4s(1-u-v+q)}+\frac{(1-p)^2}{4s(v-q)}\right)^{5/2}},
	\end{equation*}
	which we rewrite as
	\begin{equation*}
	\left(\frac{\sqrt \theta e^{-\theta s}}{\sqrt \pi \sqrt s}\right)\frac{\frac {3p^2(1-p)^2}{2\pi} \idf_{\max(0,u+v-1)\leq q \leq \min(u,v)}}
	{(q(u-q)(1-u-v+q)(v-q))^{3/2}{\left(\frac{p^2}{q}+\frac{(1-p)^2}{(u-q)}+\frac{p^2}{(1-u-v+q)}+\frac{(1-p)^2}{(v-q)}\right)^{5/2}}}.
	\end{equation*}
	Now we get the joint distribution of $(U,V)$ by integrating with respect to $s$ and $q$, which immediately yields \cref{BP_thm:expectation}.
\end{proof}

\section{Shuffling of continuous trees}
\label{BP_sec:f}

The goal of this section is to build, from a signed excursion $(g,s)$,  a shuffled excursion $f_{g,s}$, that verifies the conclusions of \cref{BP_thm:f} after setting $\tilde e = f_{e,S}$. This will not be possible for every choice of deterministic signed excursion, but we will show that it is possible for signed excursions with property \eqref{BP_eq:olt}, which is the case of $(e,S)$ with probability 1.

We start from the following observation: for every CRT excursion $g$, if we define the $a_i,b_i,c_i,h_i$ as before, then by density of the branching points it is easy to see that
\[g(t) = \sup_{i} h_i \idf_{[a_i,c_i]} (t).\]
Hence, given a sequence of signs $s$, which provides us the numbers $a'_i,b'_i,c'_i$, it is natural to define a shuffled version as such:
\[f_{g,s}(t) = \sup_{i} h_i \idf_{[a'_i,c'_i]} (t)\]

The map $(g,s,t)\mapsto f_{g,s}(t)$ is measurable because the $g(a_i)$, $a'_i$ and $c'_i$ are measurable functions of $g$ and $s$.

From now on, we will drop the dependency in $(g,s)$ in the proofs. So we set $f=f_{g,s}$ and $\varphi = \varphi_{g,s}$.
The first step is to show that $f$ is continuous whenever $(g,s)$ verifies \eqref{BP_eq:olt}. We start with two lemmas. Let $\omega(g,\delta)$ stand for the modulus of continuity of $g$ at radius $\delta$.
\begin{lemma}\label{BP_moduluscty}
	For $a'_k\leq u \leq b'_k$,  $h_k\leq f(u) \leq h_k + \omega(g, b'_k-a'_k)$.
	
	For $b'_k\leq u \leq c'_k$,  $h_k\leq f(u) \leq h_k + \omega(g, c'_k-b'_k)$.
\end{lemma}
\begin{proof}
	The two claims are symmetric, thus only the first is proved.
	Recall that $f(u) = \sup_{[a'_i,c'_i] \ni u} h_i$ and suppose $u\in[a'_k,b'_k]$. 
	For $i$ such that $[a'_i,c'_i] \ni u$, either $h_i\leq h_k$, or $h_i>h_k$. 
	In the latter case, $[a'_i,c'_i] \subset [a'_k,b'_k]$. 
	Hence $\abs{a_i-b_k}<\abs{b'_k-a'_k}$, and $h_i-h_k = g(a_i)-g(a_k) \leq  \omega(g,b_k-a_k) =  \omega(g,b'_k-a'_k) $.
	
	This shows that for every $i$ such that $[a'_i,c'_i] \ni u$, $h_i < h_k+ \omega(g,b'_k-a'_k)$ Taking the supremum gives the claim of the lemma.
\end{proof}

\begin{lemma}\label{BP_densityb'i}
	The $b'_i$, for $i\in \bb N$, are dense in $[0,1]$.
\end{lemma}

\begin{proof}	
	The leaves of $g$ are of full Lebesgue measure.
	If $x$ and $y$ are leaves, there is a $i$ such that $a_i<x<b_i<y<c_i$. 
	As a result of \cref{BP_lem:localization}, $b'_i$ must lie between $\varphi(x)$ and $\varphi(y)$.
	Since $\varphi$ is measure-preserving, the images of leaves of $g$ by $\varphi$ are of full measure, and hence dense in $[0,1]$.
	So the $b'_i$ are dense.
\end{proof}

\begin{proposition}\label{BP_fiscontinuous}
	Under \eqref{BP_eq:olt}, the function $f$ is continuous.
\end{proposition}

\begin{proof}
	Let $t$ be in $[0,1]$ and $\delta >0$. 
	By \cref{BP_densityb'i}, we can find $b'_i<t<b'_j$ with $(b'_j-b'_i)\leq \delta$. 
	Let $k$ be the most recent common ancestor of $i$ and $j$, so that $b'_i<b'_k<b'_j$.	
	We shall show that there is a continuous function $\underline{f}$ such that for $u\in[b'_i,b'_j]$,
	\begin{equation}\label{BP_fsoulign}
	\underline{f}(u)\leq f(u) \leq \underline{f}(u) + \omega(g,\delta)
	\end{equation}
	Which is enough, since $\delta$ was arbitrary, to show continuity in $t$. We build $\underline f$ and show \eqref{BP_fsoulign} on $[b'_k,b'_j]$ only. The interval $[b'_i,b'_k]$ can be treated with a symmetric proof.
	
	Set $\underline f : [b'_k,b'_j] \to \bb R_+$, with 
	\begin{equation*}
	\underline{f}= \sup \{h_l\idf_{[a'_l,c'_l]}  \mid l \colon[a'_k ,c'_k] \supset [a'_l,c'_l] \supset [a'_j,c'_j] \}.
	\end{equation*} 
	Clearly, $\underline{f}\leq f$.
	It is also clear that $\underline{f}$ is increasing from $h_k$ to $h_j$, because the indicator functions are nested and $h_l$ increases as $a'_l$ decreases. \Cref{BP_lem:genealogyofbp2mieux} implies that the $a'_l$ are all distinct, while property \eqref{BP_eq:olt} implies that the $h_l$ are dense in $[h_k,h_j]$. This implies continuity of $\underline f$.
	
	Now we shall show \eqref{BP_fsoulign} for $u$ in $[b'_k,b'_j]$.
	
	\noindent \textit{Case 1: for every $l$ s.t. $u\in[a'_l,c'_l]$,  we have $[a'_l,c'_l] \supset [a'_j,c'_j]$.} Then $f(u) = \underline f(u)$.
	
	\noindent \textit{Case 2: there exists $l$ s.t. $x\in[a'_l,c'_l]$ and $[a'_l,c'_l]\nsupseteq[a'_j,c'_j]$. } Then consider the most recent common ancestor $m$ of $l$ and $j$. Necessarily,
	\[b'_k<a'_m<a'_l<u<c'_l<b'_m<a'_j<c'_j<c'_m.\]
	Then \cref{BP_moduluscty} gives $h_m\leq g(u) \leq h_m + \omega(g,\delta)$. It is clear that $h_m = \underline{f}(u)$, proving \eqref{BP_fsoulign}.	
\end{proof}

Now that we have shown that $f$ is continuous, it becomes possible to define the distance $d_f$ on $[0,1]$ and the structured real tree $\cali T_f$. 

\begin{proposition}\label{BP_phiisometry}
	Under \eqref{BP_eq:olt}, we have $g=f\circ\varphi$, and furthermore, $\varphi$ is a $([0,1],d_g)\to([0,1],d_f)$ isometry.
\end{proposition}
\begin{proof}
	Let $t\in[0,1]$. To show $g(t) = f(\varphi(t))$ it is enough to see that 
	\begin{equation}\label{BP_fundamentalrelationphi}
	\{k : t\in [a_k,c_k]\} = \{k:\varphi(t) \in [a'_k,c'_k]\}.
	\end{equation}
	because $e(t)$ and $f(\varphi(t))$ are just the respective suprema of $i\mapsto h_i$ over these two sets.
	If $k$ is such that $t\in [a_k,c_k]$, then by \cref{BP_lem:localization}, $\varphi(t) \in [a'_k,c'_k]$.
	If on the other hand $k$ is such that $t\notin [a_k,c_k]$, by symmetry suppose $t<a_k$. It is then possible to find $i$ such that $t < a_i <a_k <c_k \leq c_i$.
	Then \cref{BP_lem:localization,BP_lem:genealogyofbp2mieux} imply that $\varphi(t)\notin[a'_k,c'_k]$.
	
	Now to show that $\varphi$ is a $(d_g,d_f)$ isometry, we need only show that for $x<y$, \[\min_{[x,y]} g = \min_{[\varphi(x),\varphi(y)]} f.\]
	
	\noindent \textit{Case 1:} $\min_{[x,y]} g = g(x)$. Then for every $i$, $x\in [a_i,c_i]$ implies $y\in [a_i,c_i]$. So $\varphi(x) \in [a'_i,c'_i]$ implies $\varphi(y) \in [a'_i,c'_i]$ and then $[\varphi(x),\varphi(y)] \subset [a'_i,c'_i]$. The definition of $f$ then yields $f(t)\geq f(\varphi(x))$ for every $t\in [\varphi(x),\varphi(y)]$. Hence 
	\[\min_{[\varphi(x),\varphi(y)]}f = f(\varphi(x))	= g(x) = \min_{[x,y]} g.\]
	
	\noindent \textit{Case 2:} $\min_{[x,y]} g = g(y)$. This case is similar by symmetry.
	
	\noindent \textit{Case 3:} $\min_{[x,y]} g = b_i$ for some $b_i \in (x,y)$.
	Then we conclude immediately by applying case 2 on $[x,b_i]$ and case 1 on $[b_i,y]$.
\end{proof}

\begin{proposition}\label{BP_prop:hisbrownian}
	The random continuous function $f_{e,S}$ has the distribution of a Brownian excursion with the same local times at 1 as $e$.
\end{proposition}
\begin{proof}
	The claim on the local times is an immediate consequence of the fact that for every $y\geq 0$,  $\Leb\{t, f_{g,s}(t)\leq y\} = \Leb\{t, f_{g,s}(\varphi_{g,s}(t))\leq y \} = \Leb \{t, g(t)\leq y \}$.
	
	To show that the random continuous functions $e$ and $f = f_{e, S}$ have the same distribution, we shall show that for every $k\geq 1$, if $U_{(1)}<\ldots<U_{(k)}$ are reordered uniform variables in $[0,1]$, independent of $e,S$, then
	\begin{equation} \label{BP_eq:diste=distf}
	(e(U_{(1)}),\ldots,e(U_{(k)})) \stackrel d = (f(U_{(1)}),\ldots,f(U_{(k)})).
	\end{equation}
	Deriving $e\stackrel d = f$ from there is classical, see for instance the end of the proof of the direct implication of \cite[thm. 20]{aldous1993}.
	
	Let us consider $U_{(1)}<\ldots<U_{(k)}$ the order statistics of $k$ uniform random variables in $[0,1]$, independent of $e,S$. Set $V_i= \varphi(U_{(i)})$ for every $1\leq i \leq k$. Then there exists $\alpha \in \perms_k$ such that $W_1 = V_{\alpha(1)} < \ldots < V_{\alpha(k)} = W_k$. Since $\varphi$ preserves the Lebesgue measure, $(W_1,\ldots,W_k)$ has the distribution of the order statistic of $k$ uniform variables.
	
	We consider the \textit{marked trees}, as per the definition of \cite[sect. 2.5]{legall2005}, associated to a CRT excursion and a finite number of points.  For any set $\mathbf t= (t_1 <\ldots <t_k)$ of leaves of $g$, $\theta(g;\mathbf{t})$ is built from the tree $\tau(g;\mathbf{t})$ by adding edge-lengths compatible with the distances in the tree $\mathcal T_g$. Since the root of $\tau(g;\mathbf{t})$ has a positive height, a new root $\emptyset$ is added below it. It is characterized (among plane trees with edge-lengths up to isomorphism) by the following fact: 
	\begin{equation}\label{BP_eq:caracttree}
	d_{\theta(g;\mathbf{t})}(\ell_i,\ell_j) = d_{g}(t_i,t_j),\quad d_{\theta(g;\mathbf{t})}(\emptyset,\ell_i) = g(t_i),
	\end{equation}
	where $d_{\theta(g;\mathbf{t})}$ denotes the graph distance, taking edge-lengths into account, and in any plane tree $\ell_1,\ldots,\ell_k$ is an enumeration of the leaves in the natural ordering.
	
	Let $T = \theta(e;\mathbf{U})$, and let $\widetilde T$ be obtained from $T$ by inverting the order of the children at each branching point corresponding to a $b_i$ where the sign $s_i$ is a $\ominus$.
	By definition there is an isomorphism of rooted trees with edge-lengths $\widetilde T \leftrightarrow T$.
	This isomorphism necessarily permutes the leaves:
	set $\beta\in \perms_k$ such that $\ell_i(\widetilde T)\leftrightarrow \ell_{\beta(i)}(T)$.
	Then by construction $\beta$ is such that $\varphi_{e,S}(U_{\beta(1)}) < \ldots < \varphi_{e,S}(U_{\beta(k)})$. We deduce $\beta = \alpha$, and hence	
	\begin{align*}d_{\widetilde{T}}({\ell}_i,{\ell}_j) &= d_{T}(\ell_{\alpha(i)},\ell_{\alpha(j)}) = d_e(U_{\alpha(i)}, U_{\alpha(j)}) = d_f(\varphi(U_{\alpha(i)}), \varphi(U_{\alpha(j)})) = d_{\theta(f;\mathbf W)}(\ell_i,\ell_j)\\
	d_{\widetilde{T}}(\ell_i,\emptyset) &= d_{T}(\ell_{\alpha(i)},\emptyset) = g(U_{\alpha(i)}) = g(W_i) = d_{\theta(f;\mathbf W)}(\ell_i,\emptyset).
	\end{align*}
	So $\widetilde T = \theta(f,\mathbf{W})$.
	
	Finally we consider the distribution of $\widetilde T$.
	Theorem 2.11 of \cite{legall2005} tells us that the structure of $T$ is that of a uniform planted binary tree with $k$ leaves, and the edge-lengths are exchangeable.
	So an independent shuffling of $T$ is still distributed like $T$, and this is the case of $\widetilde T$.
	We deduce $\theta(e;\mathbf{U}) = T \stackrel d= \widetilde{T} = \theta(f;\mathbf{W})$.
	From there, \eqref{BP_eq:caracttree} implies that we can recover \eqref{BP_eq:diste=distf}.
\end{proof}

Now \cref{BP_thm:f} follows from \cref{BP_fiscontinuous,BP_phiisometry,BP_olt,BP_prop:hisbrownian}, after setting $\widetilde e = f_{e,S}$.

\section*{Acknowledgements}
I warmly thank Gr\'egory Miermont for his dedicated supervision, enlightening discussions and his detailed reading of this paper. Many thanks to Mathilde Bouvel, Valentin F\'eray and S\'ebastien Martineau for enriching discussions and useful comments. 
Thanks to an anonymous referee for very helpful comments.

I am grateful for the hospitality and support of the Forschungsinstitut f\"ur Mathematik at ETH Z\"urich during a stay where part of this research was conducted. 
\bibliographystyle{plain}
\bibliography{a.bib}

\label{BP_lastpage}
\end{document}